\numberwithin{equation}{section}
\title[Strong solidity of group factors]{Strong solidity of group factors from lattices in $\SO(n,1)$ and $\SU(n,1)$}
\author{Thomas Sinclair}
\address{Thomas Sinclair, Vanderbilt University, 1326 Stevenson Center, Nashville, TN 37240}
\email{thomas.sinclair@vanderbilt.edu}
\date{\today}
\subjclass{}
\keywords{strong solidity, lattices}
\dedicatory{}
\theoremstyle{plain}
\newtheorem{thm}{Theorem}[section]
\newtheorem{cor}[thm]{Corollary}
\newtheorem{lem}[thm]{Lemma}
\theoremstyle{definition}
\newtheorem{defn}[thm]{Definition}
\theoremstyle{remark}
\newtheorem{rem}[thm]{Remark}
\newtheorem*{ackns}{Acknowledgements}
\newcommand{\car}{\curvearrowright}
\newcommand{\G}{\Gamma}
\newcommand{\g}{\gamma}
\newcommand{\e}{\varepsilon}
\newcommand{\phii}{\varphi}
\newcommand{\B}{{\mathbb B}}
\newcommand{\C}{{\mathbb C}}
\newcommand{\F}{{\mathbb F}}
\newcommand{\R}{{\mathbb R}}
\newcommand{\Z}{{\mathbb Z}}
\newcommand{\FF}{{\mathcal F}}
\newcommand{\HH}{{\mathcal H}}
\newcommand{\KK}{{\mathcal K}}
\newcommand{\LL}{{\mathcal L}}
\newcommand{\NN}{{\mathcal N}}
\newcommand{\OO}{{\mathcal O}}
\newcommand{\UU}{{\mathcal U}}
\newcommand{\ZZ}{{\mathcal Z}}
\newcommand{\Ad}{\operatorname{Ad}}
\newcommand{\Aut}{\operatorname{Aut}}
\DeclareMathOperator*{\Lim}{Lim}
\newcommand{\SL}{\operatorname{SL}}
\newcommand{\SO}{\operatorname{SO}}
\newcommand{\SU}{\operatorname{SU}}
\newcommand{\ip}[2]{\langle #1, #2 \rangle}
\providecommand{\abs}[1]{\lvert #1 \rvert}
\providecommand{\nor}[1]{\lVert #1 \rVert}
\begin{document}
\begin{abstract} We show that the group factors $L\G$, where $\G$ is an ICC lattice in either $\SO(n,1)$ or $\SU(n,1)$, $n \geq 2$, are strongly solid in the sense of Ozawa and Popa \cite{ozawapopacartanI}.  This strengthens a result of Ozawa and Popa \cite{ozawapopacartanII} showing that these factors do not have Cartan subalgebras.
\end{abstract}

\maketitle

\section*{Introduction}

In their breakthrough paper \cite{ozawapopacartanI}, Ozawa and Popa brought new techniques to bear on the study of free group factors which allowed them to show that these factors possess a powerful structural property, what they called ``strong solidity''. 

\begin{defn}[Ozawa--Popa \cite{ozawapopacartanI}]\label{def:strongsolidity}  A $\rm II_1$ factor $M$ is \textit{strongly solid} if for any diffuse amenable subalgebra $P \subset M$ we have that $\mathcal N_M(P)''$ is amenable.
\end{defn}

\noindent As usual, $\NN_M(P) = \{u \in \UU(M) : u P u^\ast = P\}$ denotes the \textit{normalizer} of $P$ in $M$. It can be seen that every nonamenable $\rm II_1$ subfactor of a strongly solid $\rm II_1$ factor is non-Gamma, prime and has no Cartan subalgebras. Thus, Ozawa and Popa's result broadened and offered a unified approach to the two main results on the structure of free group factors hitherto known: Voiculescu's \cite{voiculescuentropyIII} pioneering result, which showed that the free group factors $L\F_n$, $2 \leq n \leq \infty$, have no Cartan subalgebras, and Ozawa's \cite{ozawasolid} seminal work on ``solid'' von Neumann algebras, which showed that every nonamenable $\rm II_1$ subfactor of a free group factor is non-Gamma and prime. Moreover, they exhibited the first, and so far only, examples of $\rm II_1$ factors with a unique Cartan up to unitary conjugacy; namely, the group-measure space constructions of free ergodic profinite actions of groups with property $\rm (HH)^+$ \cite{ozawapopacartanII}; e.g., nonamenable free groups. This improved on the ground-breaking work of Popa \cite{popabetti}, which gave examples of $\rm II_1$ factors with a unique ``HT-Cartan'' subalgebra up to unitary equivalence; e.g., $L(\Z^2 \rtimes \SL_2(\Z))$.

By incorporating ideas and techniques of Peterson \cite{petersonl2}, Ozawa and Popa \cite{ozawapopacartanII} were later able to extend the class of strongly solid factors to, in particular, all group factors of ICC lattices in $\SL(2,\R)$ or $\SL(2,\C)$. Other examples of strongly solid factors were subsequently constructed by Houdayer \cite{houdayerstronglysolid} and by Houdayer and Shlyakhtenko \cite{houdayershlyakhtenko}.

In this paper we will demonstrate the following results:

\begin{thm}\label{thm:main1}  If $\G$ is an ICC lattice in $\mathrm{SO}(n,1)$ or $\mathrm{SU}(n,1)$, $n \geq 2$, then $L\G$ is strongly solid.
\end{thm}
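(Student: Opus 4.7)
The plan is to adapt the Peterson--Ozawa--Popa framework of \cite{ozawapopacartanI, ozawapopacartanII, petersonl2} that handled the case of lattices in $\SL(2,\R)$ and $\SL(2,\C)$. In broad terms, to conclude that $L\G$ is strongly solid it suffices to establish two ingredients for $\G$: first, that $\G$ is weakly amenable with Cowling--Haagerup constant $\Lambda_{cb}(\G)=1$, and second, that $\G$ admits a closable derivation $\delta : \C\G \to \HH$ whose target is an $L\G$-bimodule weakly contained in the coarse $L\G$-bimodule, and whose associated $L^2$-deformation of $L\G$ is ``proper'' in the sense of Peterson. Once these two ingredients are in hand, the Ozawa--Popa mechanism runs as follows: given a diffuse amenable subalgebra $P \subset L\G$, amenability of $P$ combined with the first ingredient yields weak compactness of the normalizer action on $P$, while the deformation from the second ingredient must then converge uniformly on the unitary group of $\NN_{L\G}(P)''$, forcing $\NN_{L\G}(P)''$ to be amenable.

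For the first ingredient I would invoke the theorem of Cowling--Haagerup that $\SO(n,1)$ and $\SU(n,1)$ are weakly amenable with $\Lambda_{cb}=1$, a property which passes to lattices and then to the group von Neumann algebra. This upgrades the bare Haagerup property of $\G$ to the operator-algebraic approximation property required to run Ozawa--Popa's weak compactness argument inside $L\G$.

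The heart of the argument is the construction of the derivation and its associated deformation. Both $\SO(n,1)$ and $\SU(n,1)$ are a-T-menable, and in fact admit a proper $1$-cocycle $b : \G \to H_\pi$ into a unitary representation $\pi$ weakly contained in the left regular representation $\lambda_\G$. For $\SO(n,1)$ such tempered proper cocycles can be extracted from the analytic continuation of the spherical principal series toward the edge of the complementary series, while for $\SU(n,1)$ one has analogous constructions via Bergman-type representations. Given such a cocycle $b$, I would manufacture a closable derivation $\delta : \C\G \to \HH$, where $\HH = \ell^2(\G) \otimes H_\pi$ is the $L\G$-bimodule associated with $\pi$, by declaring $\delta(u_g) = b(g) \otimes u_g$ on the canonical unitaries and extending. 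Peterson's $L^2$-rigidity machinery then converts $\delta$ into a malleable deformation of $L\G$ whose ``properness'' directly reflects properness of $b$.

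The main obstacle I anticipate is verifying that the desired proper cocycle actually lands in a tempered representation, particularly in the $\SU(n,1)$ case: the naive boundary representation on $L^2$ of the topological boundary of the complex ball is not itself weakly contained in $\lambda_\G$, so one must instead work with a carefully chosen analytic continuation or twisting of boundary representations. Once such a tempered proper cocycle is in place, the combination of $\Lambda_{cb}(\G)=1$, properness of the resulting deformation, and the Ozawa--Popa weak compactness argument should assemble into strong solidity of $L\G$ in the by-now standard fashion.
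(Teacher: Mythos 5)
Your proposal reproduces the Ozawa--Popa skeleton correctly (CMAP of the lattice via Cowling--Haagerup gives weak compactness of the normalizer action; a proper cocycle gives a deformation that cannot converge uniformly on the unit ball of a diffuse subalgebra), but it founders on exactly the point you flag as an ``obstacle'' and then set aside: for lattices in $\SO(n,1)$ with $n \geq 3$ and in $\SU(n,1)$ with $n \geq 2$, there is no proper cocycle into a tempered representation available. Shalom's theorem, which is what the paper invokes, only produces an unbounded (hence automatically proper) cocycle into a representation that is strongly $\ell^p$ for some $p \geq 2$ growing with $n$; such a representation is tempered only when $p = 2$, which is precisely the $\SL(2,\R)$/$\SL(2,\C)$ situation already treated by Ozawa and Popa. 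The representations of these lattices carrying nontrivial cohomology sit at the edge of the complementary series, i.e.\ in the non-tempered part of the dual, so ``a carefully chosen analytic continuation or twisting'' cannot rescue the construction. Without temperedness of the coefficient bimodule $\HH$, the invariant mean your argument extracts lives on a bimodule that is not weakly contained in the coarse bimodule, and the standard mechanism yields only a form of relative amenability of $\NN_{L\G}(P)''$ with respect to $\HH$, not amenability.

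The paper's actual content is the device that bridges this gap. Only a tensor power $\pi^{\otimes K}$ (for $K > p/2$) is weakly contained in the left regular representation, and one cannot simply rerun the derivation argument in the tensor power because the deformation perturbs the bimodule and the perturbed bimodule does not behave predictably under tensoring. Instead, the paper (i) characterizes left amenability of a correspondence $\HH$ over $Q$ by the existence of a $Q$-central state on $\B(\HH)\cap\pi_{M^o}(M^o)'$ restricting to the trace on $M$ (Theorem \ref{thm:hypertrace}, via a Sauvageot-type identification of $\HH\otimes_M\bar\HH$ with $\LL^2(\NN)$); (ii) shows the Gaussian deformation associated to the proper cocycle produces such a state (Theorem \ref{thm:amenablenormalizer}); (iii) converts the state back into a net of almost central, trace-reproducing vectors in $\HH\otimes_M\bar\HH$; and (iv) takes $M$-tensor powers of these vectors --- almost invariance being stable under tensoring, unlike the deformation itself --- to land in a correspondence weakly contained in the coarse one, whence amenability of $Q = \NN_M(P)''$ by Haagerup's criterion (Theorem \ref{thm:generalizedmain}). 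You would need to supply this tensor-power transfer argument (or an equivalent) for your proof to go through; as written, the proposal only covers the cases where a tempered proper cocycle actually exists.
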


\begin{thm}\label{thm:main2} Let $\G$ be a nonamenable, countable, discrete group which has the complete metric approximation property (Definition \ref{def:cmap}).  If $\G \cong \G_1 \ast \G_2$ decomposes as a non-trivial free product, then $L\G$ has no Cartan subalgebras. Moreover, if $N \subset L\G$ is a $\rm II_1$ factor which has a Cartan subalgebra, then there exists projections $p_1, p_2$ in the center of $N' \cap L\G$ such that $p_1 + p_2 = 1$ and unitaries $u_1, u_2 \in \UU(M)$ such that $u_i N p_i u_i^* \subset L\G_i \subset L\G$, $i \in \{1,2\}$.
\end{thm}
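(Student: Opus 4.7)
The plan is a deformation/rigidity argument in the style of \cite{ozawapopacartanI, ozawapopacartanII}: the free product decomposition supplies a deformation and CMAP supplies the rigidity. First, I would work with the natural malleable $s$-deformation $\{\alpha_t\}_{t \in \R}$ associated to the free product decomposition $L\G = L\G_1 \ast L\G_2$, along the lines of the Ioana--Peterson--Popa construction for free products. Concretely, pass to the enlargement $\tilde M = L\G \ast L\F_2$ and let $\alpha_t$ be the automorphism of $\tilde M$ implemented by conjugation by a pair of one-parameter unitaries whose generators lie in the free copy of $L\F_2$ and which ``rotate'' each $L\G_i$ away from itself. The essential analytic property of this deformation is transversality: there is a constant $c > 0$ with
\[
\|\alpha_{2t}(x) - E_{L\G}\alpha_{2t}(x)\|_2 \;\geq\; c \, \|\alpha_t(x) - E_{L\G}\alpha_t(x)\|_2
\]
for all $x \in L\G$, together with the complementary principle that uniformly fast convergence of $\alpha_t$ to the identity on the unit ball of a subalgebra $B \subseteq L\G$ forces $B$ to intertwine into $L\G_1$ or $L\G_2$ in Popa's sense.

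Second, I would invoke the Ozawa--Popa rigidity lemma (essentially \cite[Theorem~4.9]{ozawapopacartanI}, adapted to this $s$-malleable deformation): since $\G$ has CMAP, so does $L\G$, hence for every diffuse amenable $A \subset L\G$ the deformation $\alpha_t$ converges to the identity uniformly on the unit ball of $\NN_{L\G}(A)''$. The argument combines CMAP-approximation by finite-rank completely bounded multipliers with an averaging trick over unitaries normalizing $A$, exactly as in \cite{ozawapopacartanI}.

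Third, suppose $N \subset L\G$ is a $\rm II_1$ subfactor containing a Cartan subalgebra $A$. Then $N = \NN_N(A)'' \subseteq \NN_{L\G}(A)''$, so the rigidity lemma yields uniform convergence $\alpha_t \to \id$ on the unit ball of $N$, and the transversality criterion produces an intertwiner of a corner of $N$ into some $L\G_i$. A standard maximality argument --- pick a maximal projection in $N' \cap L\G$ on which such a local intertwiner is defined, then iterate in the orthogonal complement --- packages these local intertwiners into a pair of central projections $p_1, p_2 \in \ZZ(N' \cap L\G)$ with $p_1 + p_2 = 1$ and unitaries $u_1, u_2 \in \UU(L\G)$ satisfying $u_i N p_i u_i^* \subseteq L\G_i$, giving the moreover clause. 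Specializing to $N = L\G$ kills the first assertion: a Cartan in $L\G$ would force $L\G$ itself to embed as a corner of some $L\G_i$, which is impossible since $\G$ is nonamenable and the free product is nontrivial (the image of $\G$ would fix a vertex of the Bass--Serre tree of $\G_1 \ast \G_2$).

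The main obstacle will be the transversality-to-intertwining step. One has to turn uniform smallness of the off-diagonal component $\alpha_t(x) - E_{L\G}\alpha_t(x)$ on the unit ball of $N$ into an honest corner-wise embedding of $N$ into some $L\G_i$, rather than a merely asymptotic statement. This is where the $s$-malleability of $\alpha_t$ (the period-two symmetry flipping $\alpha_t$ and $\alpha_{-t}$) enters crucially, via Popa's tight-embedding trick and his intertwining-by-bimodules criterion; getting the right quantitative form of transversality for the free-product deformation, and threading it through the maximality argument to produce the two central projections simultaneously, is the technical core of the proof.
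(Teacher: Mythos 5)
Your step 2 overstates what the Ozawa--Popa machinery delivers, and this is a genuine gap. Weak compactness of the normalizer action (Theorem \ref{thm:weaklycompactnormalizers}) played against a malleable deformation does \emph{not} give uniform convergence of the deformation on the unit ball of $\NN_{L\G}(A)''$ unconditionally; what it gives is a dichotomy (this is the content of Theorem \ref{thm:amenablenormalizer}): either the deformation converges $\nor{\cdot}_2$-uniformly on $(Qp)_1$, or the deformation bimodule is left amenable over $Q=\NN_{L\G}(A)''$. For a free-product deformation the relevant bimodule is a multiple of the coarse bimodule, so the second horn says $Q$ has an amenable corner. Your argument never confronts this horn. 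For the first assertion, killing it is exactly where the hypothesis that $\G$ is nonamenable must enter (with $Q=L\G$, left amenability of a multiple of the coarse bimodule over $L\G$ forces $\G$ amenable); in your write-up nonamenability surfaces only in the closing Bass--Serre remark, which concerns group embeddings and is beside the point --- once you have $u L\G u^*\subset L\G_i$ with $u\in\UU(L\G)$ the contradiction is immediate since $uL\G u^*=L\G$. For the second assertion the amenable horn cannot be dismissed at all: a $\mathrm{II}_1$ subfactor $N$ with a Cartan subalgebra may perfectly well be amenable, in which case no uniform convergence is forthcoming and your transversality-to-intertwining step never starts. Any complete proof has to either handle that case separately or reduce to the exact hypotheses of Theorems 4.3 and 5.1 of \cite{ioanapetersonpopa}, where the relevant case analysis lives.

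Beyond this, your route differs from the paper's in a way worth noting. The paper does not run the weak-compactness argument directly against the free-product deformation $\theta_t=\Ad(u_1^t)\ast\Ad(u_2^t)$ on $L\G\ast L\F_2$. Instead it encodes the free product decomposition of the \emph{group} as an unbounded cocycle $b:\G\rightarrow\bigoplus^\infty\ell^2\G$ into a multiple of the left regular representation, applies Theorem \ref{thm:amenablenormalizer} to the associated Gaussian (Parthasarathy--Schmidt) deformation $\alpha_t$ to obtain uniform convergence on $(N)_1$ (for the first assertion, the two horns of the dichotomy contradict, respectively, the nonamenability of $\G$ and the unboundedness of $b$), and only then \emph{transfers} this uniform convergence to $\theta_t$ via Lemma 5.1 of \cite{petersonl2} and Corollary 4.2 of \cite{petersonsinclair}; the transversality, intertwining, and the production of $p_1,p_2,u_1,u_2$ are then quoted wholesale from \cite{ioanapetersonpopa} rather than reproved. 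Your plan to rederive the transversality and intertwining steps by hand is not wrong in principle, but it amounts to reproving the technical bulk of \cite{ioanapetersonpopa}; the more pressing repair is to restate your ``rigidity lemma'' as the dichotomy above and to say explicitly how the amenable alternative is disposed of in each of the two assertions.
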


\noindent By a \textit{lattice} we mean a discrete subgroup $\G < G$ of some Lie group with finitely many connected components such that $G/\G$ admits a regular Borel probability measure invariant under left translation by $G$. These factors are already known by the work of Ozawa and Popa \cite{ozawapopacartanII} to have no Cartan subalgebras. Since $\SO(n,1)$ and $\SU(n,1)$ are semisimple Lie groups with finite center, Theorem 6.5 in \cite{cowlinghaagerup} shows that every $\g \in \G$ which is not in the center of $G$ has infinite $\G$-conjugacy class, so examples of ICC lattices abound. In the $\SO(n,1)$ case, the restriction of the lattice subgroup $\G$ to the identity component $\SO_0(n,1)$ is always ICC, $\SO_0(n,1)$ having trivial center, and all results in our paper will hold for these groups.

The proof follows the same strategy as Ozawa and Popa's in \cite{ozawapopacartanI, ozawapopacartanII}. Though, instead of working with closable derivations, we use a natural one-parameter family of deformations first constructed by Parthasarathy and Schmidt \cite{parthaschmidt}. The derivations Ozawa and Popa consider appear as the infinitesimal generators of these deformations (so, the approaches are largely equivalent), but by using the deformations we avoid some of the technical issues which arise when working with derivations. 

The main difficulty in obtaining Theorem \ref{thm:main1} for lattice factors in $\SO(n,1)$ or $\SU(n,1)$ when $n \geq 3$ is that the bimodules which admit good deformations/derivations are themselves too weak to allow one to deduce the amenability of the normalizer algebra e.g., strong solidity.  However, sufficiently large tensor powers of these bimodules can be used to deduce strong solidity. Unfortunately, derivation techniques perturb the original bimodules slightly, and the behavior of tensor powers of the perturbed bimodules becomes unclear. To circumvent this problem, we first notice that Ozawa and Popa's techniques actually allow one to deduce a kind of relative amenability of the normalizer subalgebra with respect to the bimodule, given in terms of an ``invariant mean''. We then use a result of Sauvageot \cite{sauvageottensor} to obtain from the invariant mean an almost invariant sequence of vectors in the bimodule. Since the property of having an almost invariant sequence of vectors is stable under taking tensor powers, we are able to transfer relative amenability to a large tensor power of the bimodule in order to deduce amenability of the normalizer algebra.

\begin{ackns} The author is grateful to Jesse Peterson for suggesting the problem and making several helpful comments and suggestions. The author also thanks Jesse Peterson and Ionut Chifan for many stimulating discussions on the work of Ozawa and Popa. \end{ackns}

\section{Preliminaries}  We collect in this section the necessary definitions, concepts and results needed for the proofs of Theorems \ref{thm:main1} and \ref{thm:main2}.

\subsection{Representations, Correspondences and Weak Containment}
Let $\G$ be a countable discrete group and $\pi,\rho$ be unitary representations of $\G$ into separable Hilbert spaces $\HH_\pi$ and $\HH_\rho$, respectively.

\begin{defn} We say that $\rho$ is \textit{weakly contained} in $\pi$ if for any $\e >0$, $\xi \in \HH_\rho$ and any finite subset $F \subset \G$, there exist vectors $\xi_1',\dots,\xi_n' \in \HH_\pi$ such that $\abs{\ip{\rho(\g)\xi}{\xi} - \sum_{i=1}^n \ip{\pi(\g)\xi_i'}{\xi_i'}} < \e$ for all $\g \in F$.
\end{defn}

\noindent A representation $\pi$ is said to be \textit{tempered} if it is weakly contained in the left-regular representation, and \textit{strongly $\ell^p$} \cite{shalomrigidity} if for any $\e>0$, there exists a dense subspace $\HH_0 \subset \HH$ such that for all $\xi,\eta \in \HH_0$ the matrix coefficient $\ip{\pi(\g)\xi}{\eta}$ belongs to $\ell^{p+\e}(\G)$.  By a theorem of Cowling, Haagerup and Howe \cite{cowlinghaageruphowe}, a representation which is strongly $\ell^2$ is tempered. As was pointed out in \cite{shalomrigidity}, applying standard H\"older estimates to the matrix coefficients, we obtain that if $\pi$ is strongly $\ell^p$ for some $p \geq 2$, then for all $n > p/2$, $\pi^{\otimes n}$ is strongly $\ell^2$, hence tempered.

In the theory of von Neumann algebras, correspondences (also called Hilbert bimodules) play an analogous role to unitary representations in the theory of countable discrete groups. For von Neumann algebras $N$ and $M$, recall that an $N$-$M$ correspondence is a $\ast$-representation $\pi$ of the algebraic tensor $N \odot M^o$ into the bounded operators on a Hilbert space $\HH$ which is normal when restricted to both $N$ and $M^o$.  We will denote the restrictions of $\pi$ to $N$ and $M^o$ by $\pi_N$ and $\pi_{M^o}$, respectively. When the $N$-$M$ correspondence $\pi$ is implicit for the Hilbert space $\HH$, we will use the notation $x\xi y$ to denote $\pi(x \otimes y^o)\xi$, for $x \in N$, $y \in M$ and $\xi \in \HH$.

\begin{defn} Let $\pi: N \odot M^o \rightarrow \B(\HH_\pi)$, $\rho: N \odot M^o \rightarrow \B(\HH_\rho)$ be correspondences.  We say that $\rho$ is \textit{weakly contained} in $\pi$ if for any $\e > 0$, $\xi \in \HH_\rho$ and any finite subsets $F_1 \subset N, \ F_2 \subset M$, there exist vectors $\xi_1,\dotsc,\xi_n' \in \HH_\pi$ such that $\abs{\ip{x\xi y}{\xi} - \sum_{i=1}^n \ip{x\xi_i' y}{\xi_i'}} < \e$ for all $x \in F_1, \ y \in F_2$.
\end{defn}

There is a well-known functor from the category whose objects are (separable) unitary representations of $\G$ and morphisms weak containment to the one of $L\G$-$L\G$ correspondences and weak containment, cf. \cite{popavaeswstar}, which translates the representation theory of $\G$ into the theory of $L\G$-$L\G$ correspondences.  The construction is as follows.  Given $\pi: \G \rightarrow \UU(\HH_\pi)$ a unitary representation, let $\HH^\pi$ be the Hilbert space $\HH_\pi \otimes \ell^2\G$.  Then, the maps $u_\g(\xi \otimes \eta) = \pi(\g)\xi \otimes u_\g\eta$, $(\xi \otimes \eta)u_\g = \xi \otimes (\eta u_{\g})$ extend to commuting normal representations of $L\G$ and $(L\G)^o$ on $\HH^\pi$: the former by Fell's absorption principle, the latter trivially.  This functor is well-behaved with respect to tensor products; i.e., $\HH^{\pi \otimes \rho} \cong \HH^\pi \otimes_{L\G} \HH^\rho$ as $L\G$-$L\G$ correspondences for any unitary $\G$-representations $\pi$ and $\rho$. We refer the reader to \cite{adamenable, popacorrespondences} for the theory of tensor products of correspondences and the basic theory of correspondences in general.

For a $\rm II_1$ factor $M$ there are two canonical correspondences: the \textit{trivial correspondence}, $L^2(M)$ with $M$ acting by left left and right multiplication, and the \textit{coarse correspondence}, $L^2(M) \otimes L^2(\bar M)$ with $M$ acting by left multiplication of the left copy of $L^2(M)$ and right multiplication on the right copy. When $M = L\G$ for some countable discrete group, the trivial and coarse correspondences are the correspondences induced respectively by the trivial and left regular representations of $\G$.

\subsection{Cocycles and the Gaussian Construction}\label{sec:gaussian}  In this section, $\HH$ will denote a \textit{real} Hilbert space which we will fix along with a orthogonal representation $\pi: \G \rightarrow \OO(\HH)$ of some countable discrete group $\G$.

\begin{defn} A cocycle is a map $b : \G \rightarrow \HH$ satisfying the cocycle relation \[b(\g\g') = b(\g) + \pi(\g)b(\g') \textup{, for all } \g,\g' \in \G.\]
\end{defn}

Given $\pi$ and $\HH$, there is a canonical standard probabilty space $(X,\mu)$ and a canonical measure-preserving action $\G \car^\sigma (X,\mu)$ such that there is a Hilbert space embedding of $\HH$ into $L_\R^2(X,\mu)$ intertwining $\pi$ and the natural representation induced on $L_\R^2(X,\mu)$  by $\sigma$.  This is know as the \textit{Gaussian construction}, cf. \cite{petersonsinclair} or \cite{schmidtgaussian}.  It is well-known that the natural $\G$ representation $\sigma_0$ on $L_0^2(X,\mu) = L^2(X,\mu) \ominus \C \, 1_X$ inherits all ``stable'' properties from $\pi$, cf. \cite{petersonsinclair}.  In particular, $\sigma_0^{\otimes n}$ is tempered if and only if $\pi^{\otimes n}$ is tempered for any $n \geq 1$.

It was discovered by Parthasarathy and Schmidt \cite{parthaschmidt} that cocycles also fit well into the framework of the Gaussian construction, inducing one-parameter families of deformations (i.e., cocycles) of the action $\sigma$.  To be precise:

\begin{thm}[Parthasarathy--Schmidt \cite{parthaschmidt}] \label{thm:cocyclefamily}\label{thm:gaussian} Let $b: \G \rightarrow \HH$ be a cocycle, then there exists a one-parameter family $\omega_t: \G \rightarrow \UU(L^\infty(X,\mu)), \ t \in \R$ such that:
\begin{equation}
\omega_t(\g\g') = \omega_t(\g)\sigma_\g(\omega_t(\g')) \textup{, for all } \g,\g' \in \G \textup{; and}
\end{equation}
\begin{equation}\label{eq:cocyclenorm}
\int \omega_t(\g) d\mu = \exp(-(t\nor{b(\g)})^2) \textup{, for all } \g \in \G.
\end{equation}
\end{thm}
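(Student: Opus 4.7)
The plan is to define $\omega_t(\g)$ as an exponential of Gaussian random variables built directly from the cocycle $b$ via the Gaussian functor. Concretely, the Gaussian construction of Section \ref{sec:gaussian} furnishes a Hilbert space embedding $\iota: \HH \hookrightarrow L^2_\R(X,\mu)$ which intertwines $\pi$ with the orthogonal representation induced by $\sigma$ and has the property that for each $v \in \HH$ the function $\iota(v)$ is a centered Gaussian random variable with variance $\nor{v}^2$. With this in hand, the natural choice is
\[
\omega_t(\g) := \exp\bigl(i\sqrt{2}\,t\,\iota(b(\g))\bigr),
\]
which lies in $\UU(L^\infty(X,\mu))$ since $\iota(b(\g))$ is a real-valued measurable function (so the exponential has modulus one pointwise).

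For the cocycle relation (1.1), I would apply $\iota$ to the identity $b(\g\g') = b(\g) + \pi(\g)b(\g')$ and use linearity together with the intertwining property to get
\[
\iota(b(\g\g')) = \iota(b(\g)) + \sigma_\g\bigl(\iota(b(\g'))\bigr).
\]
Since $\sigma_\g$ is a $\ast$-automorphism of $L^\infty(X,\mu)$ it commutes with the continuous functional calculus, so exponentiating both sides of the above identity yields $\omega_t(\g\g') = \omega_t(\g)\,\sigma_\g(\omega_t(\g'))$, as required.

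For the norm formula (1.2), I would use the fact that $\iota(b(\g))$ is a centered Gaussian random variable with variance $\nor{b(\g)}^2$, so its characteristic function satisfies
\[
\int \exp\bigl(is\,\iota(b(\g))\bigr)\,d\mu = \exp\Bigl(-\tfrac{1}{2}s^2\,\nor{b(\g)}^2\Bigr)
\]
for every $s \in \R$. Specializing to $s = \sqrt{2}\,t$ gives $\int \omega_t(\g)\,d\mu = \exp(-(t\nor{b(\g)})^2)$, matching (1.2) on the nose.

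There is no serious obstacle in this argument: once the Gaussian construction has been set up and its intertwining property recorded, the proof is a direct verification. The only point requiring care is bookkeeping of normalization constants, namely the choice of the factor $\sqrt{2}$ in the exponent, which depends on the convention fixed in Section \ref{sec:gaussian} for the variance of the Gaussians produced by $\iota$; with a different normalization this constant changes but the argument is otherwise unaffected.
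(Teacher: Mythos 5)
Your construction is correct: the paper states this result as a citation to Parthasarathy--Schmidt and gives no proof of its own, and your argument is precisely the standard one underlying that reference --- define $\omega_t(\g)$ as the exponential of the Gaussian field evaluated at $b(\g)$, get the cocycle identity from linearity of the embedding plus the intertwining property, and get \eqref{eq:cocyclenorm} from the characteristic function of a centered Gaussian. Your normalization $\sqrt{2}\,t$ is the right one for the convention that $\iota(v)$ has variance $\nor{v}^2$, which is what makes the exponent come out as $-(t\nor{b(\g)})^2$ rather than $-\tfrac{1}{2}t^2\nor{b(\g)}^2$.
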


\subsection{Weak Compactness and the CMAP}

\begin{defn}[Ozawa--Popa \cite{ozawapopacartanI}]\label{def:weaklycompact} Let $(P,\tau)$ be a finite von Neumann algebra equipped with a trace $\tau$, and $G \car^\sigma P$ be an action of a group $G$ on $A$ by $\tau$-preserving $\ast$-automorphisms.  We say that the action $\sigma$ is \textit{weakly compact} if there exists a net of unit vectors $(\eta_k) \in L^2(P \bar \otimes \bar P, \tau \otimes \bar \tau)_{+}$ such that:
\begin{enumerate}
\item $\nor{\eta_k - (v \otimes \bar v)\eta_k} \rightarrow 0 \text{, for all } v \in \UU(P)$;
\item $\nor{\eta_k - (\sigma_g \otimes \bar \sigma_g)\eta_k} \rightarrow 0 \text{, for all } g \in G$; and
\item $\ip{(x \otimes 1)\eta_k}{\eta_k} = \tau(x) = \ip{(1 \otimes \bar x)\eta_k}{\eta_k} \text{, for all } x \in P$.
\end{enumerate}
\end{defn}

\begin{defn}\label{def:cmap}  A $\rm II_1$ factor $M$ is said to have the \textit{complete metric approximation property} (CMAP) if there exists a net $(\phii_i)$ of finite-rank, normal, completely bounded maps $\phii_i: M \rightarrow M$ such that $\limsup\nor{\phii_i}_{\rm cb} \leq 1$ and such that $\nor{\phii_i(x) - x}_2 \rightarrow 0 \text{, for all } x \in M.$
\end{defn}

\noindent If $\G$ is an ICC countable discrete group, then $L\G$ has the CMAP if and only if the Cowling-Haagerup constant of $\G$, $\Lambda_{\rm cb}(\G)$, equals 1, and if $\G$ is a lattice in $G$, then $\Lambda_{\rm cb}(\G) = \Lambda_{\rm cb}(G)$, cf. \S 12.3 of \cite{brownozawa} and \cite{haagerupwithoutcmap}.

\begin{thm}[Ozawa--Popa, Theorem 3.5 of \cite{ozawapopacartanI}]\label{thm:weaklycompactnormalizers} Let $M$ be a $\rm II_1$ factor which has the CMAP.  Then for any diffuse amenable $\ast$-subalgebra $A \subset M$, $\NN_M(A)$ acts weakly compactly on $A$ by conjugation.
\end{thm}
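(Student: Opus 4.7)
The plan is to convert the CMAP approximation of the identity into an asymptotically invariant net of positive vectors in $L^2(M \bar\otimes \bar M)$ by combining a Hilbert--Schmidt representation of the cb approximating maps with the amenability of $A$ and a Powers--St\o rmer convexity argument.

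First, I would fix the CMAP net $(\phi_i)$ of finite-rank normal cb maps $M \to M$ with $\limsup \|\phi_i\|_{\mathrm{cb}} \leq 1$ and $\|\phi_i(x) - x\|_2 \to 0$. After standard perturbations (sandwiching with spectral cut-offs and correcting by scalars) one may assume each $\phi_i$ is unital, $\tau$-preserving, and $\|\phi_i\|_{\mathrm{cb}} \leq 1$. By the Kraus--Stinespring representation of a normal cb map combined with the trace $\tau$, one associates to each $\phi_i$ a vector $\xi_i \in L^2(M \bar\otimes \bar M)$ with $\|\xi_i\|_2 \leq 1$ which asymptotically satisfies condition (3) of Definition \ref{def:weaklycompact}, namely $\langle (x \otimes 1)\xi_i, \xi_i\rangle \to \tau(x)$ and $\langle (1 \otimes \bar y)\xi_i, \xi_i\rangle \to \tau(y)$ for all $x, y \in M$.

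Next I would establish pointwise almost invariance under $\UU(M)$-conjugation: for every $u \in \UU(M)$, $\|\xi_i - (u \otimes \bar u)\xi_i(u^* \otimes \bar u^*)\|_2 \to 0$. Viewing the $\xi_i$ as Hilbert--Schmidt operators $T_i$ on $L^2 M$ this reads $\|T_i - u T_i u^*\|_{\mathrm{HS}} \to 0$, and follows from the $\|\cdot\|_2$-pointwise convergence $\phi_i \to \id$ together with the uniform cb bound, via a standard Haagerup-style argument controlling the commutator $[\phi_i, \Ad u]$. Amenability of $A$ now enters: the positive functionals $\omega_i(T) = \langle T \xi_i, \xi_i\rangle$ on $\B(L^2 M) \bar\otimes \B(L^2 \bar M)$ are asymptotically $\Ad(v \otimes \bar v)$-invariant for every $v \in \UU(A)$, and averaging over an invariant mean on the amenable group $\UU(A)$ produces a weak-$*$ limit state $\omega_A$ that is truly $\UU(A)$-invariant and still satisfies (3).

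For the normalizer condition, the asymptotic $\UU(M)$-invariance from the previous paragraph implies in particular that each $\omega_i$ is asymptotically $\Ad(u \otimes \bar u)$-invariant for $u \in \NN_M(A) \subset \UU(M)$; since $uAu^* = A$, this compatibility survives the averaging over $\UU(A)$, so in the weak-$*$ limit one obtains a state $\omega$ on $\B(L^2 M) \bar\otimes \B(L^2 \bar M)$ invariant under both $\UU(A)$ and $\NN_M(A)$ and satisfying (3). The chief technical obstacle is the passage from this invariant state back to an invariant net of positive vectors in $L^2(M \bar\otimes \bar M)_+$: I would handle this via a Namioka--Day convexity trick applied to positive Hilbert--Schmidt operators, using the Powers--St\o rmer inequality $\|\xi - \eta\|_2^2 \leq \|\xi^2 - \eta^2\|_1$ (for $\xi,\eta \geq 0$) to upgrade weak-$*$ invariance of states into $\|\cdot\|_2$-convergence of the corresponding positive square roots, thereby producing the required net $(\eta_k)$ satisfying conditions (1)--(3) of Definition \ref{def:weaklycompact}.
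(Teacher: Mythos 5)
The paper offers no proof of this statement; it is imported wholesale from Ozawa--Popa, so your attempt has to be measured against their argument. Unfortunately your route breaks down at the second step, and in a way that cannot be repaired. First, the Hilbert--Schmidt vector $\xi_i \in L^2(M \bar\otimes \bar M)$ attached to a finite-rank normal map $\phii_i$ with $\nor{\phii_i}_{\rm cb} \leq 1$ does \emph{not} satisfy $\nor{\xi_i}_2 \leq 1$: for the trace-preserving conditional expectation onto a unital copy of $M_n(\C)$ inside $M$ the associated vector is $\sum_{j,k} e_{jk} \otimes \overline{n\,e_{jk}}$, which has $L^2$-norm $n$ while the cb norm is $1$. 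Second, and fatally, the package of properties you then claim for the (normalized) $\xi_i$ --- unit vectors of the coarse $M$-$M$ correspondence that are asymptotically trace vectors for the left action and asymptotically invariant under $\Ad(u \otimes \bar u)$ for \emph{every} $u \in \UU(M)$ --- is precisely the statement that the trivial $M$-$M$ correspondence is weakly contained in the coarse one, i.e.\ that $M$ is amenable. Since the theorem is applied to nonamenable factors such as $L\F_2$, no such net can exist, and no ``standard Haagerup-style argument controlling $[\phii_i, \Ad u]$'' can manufacture one. A further problem: averaging over an invariant mean on $\UU(A)$ is not available, since $\UU(A)$ need not be amenable as a discrete group even when $A$ is injective (already $\UU(M_2(\C))$ contains free subgroups).

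The missing idea is that one must abandon $L^2(M \bar\otimes \bar M)$ altogether and pass to the normal functionals $\mu_i$ on $A \bar\otimes \bar A$ given by $\mu_i(a \otimes \bar b) = \tau(b^\ast \phii_i(a))$. This is exactly where amenability of $A$ enters: injectivity of $A$ makes the $\ast$-representation $a \otimes \bar b \mapsto a J b J$ of $A \odot \bar A$ on $L^2(M)$ continuous for the minimal tensor norm, so that (by Effros--Lance) $\mu_i$ extends to a normal functional on the von Neumann tensor product $A \bar\otimes \bar A$ with $\nor{\mu_i} \leq \nor{\phii_i}_{\rm cb}$. With that norm control, $\mu_i(1) \rightarrow 1$ and $\limsup \nor{\mu_i} \leq 1$ force the $\mu_i$ to be asymptotically states; the asymptotic $\NN_M(A)$-invariance is established at the level of these functionals by comparing $\phii_i$ with $\Ad(u^\ast) \circ \phii_i \circ \Ad(u)$; and only then does one extract the net $(\eta_k) \subset L^2(A \bar\otimes \bar A)_+$ --- note that Definition \ref{def:weaklycompact} requires vectors in $L^2(P \bar\otimes \bar P)$, not $L^2(M \bar\otimes \bar M)$ --- via the Powers--St\o rmer inequality. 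Your closing convexity step is sound, but everything feeding into it has to be rebuilt along these lines.
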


\section{Amenable Correspondences}

\begin{defn}[Anantharaman-Delaroche \cite{adamenable}] An $N$-$M$ correspondence $\HH$ is called \textit{(left) amenable} if $\HH \otimes_M \bar \HH$ weakly contains the trivial $N$-$N$ correspondence.
\end{defn}

The concept of amenability for correspondences is the von Neumann algebraic analog of the concept of amenablity of a unitary representation of a locally compact group due to Bekka \cite{bekkaamenable}. As was observed by Bekka, amenabilty of the representation $\pi$ is equivalent to the existence of a state $\Phi$ on $\B(\HH)$ satisfying $\Phi(\pi(g)T) = \Phi(T\pi(g))$ for all $g \in G,\ T \in \B(\HH)$.  One can ask if a similar criterion holds for amenable correspondences.  When $M$ is a $\rm II_1$ factor, we will show that this indeed is the case if we replace $\B(\HH)$ with the von Neumann algebra $\NN = \B(\HH) \cap \pi_{M^o}(M^o)'$.  That is, we obtain the following characterization of amenable correspondences:

\begin{thm}[Compare with Theorem 2.1 in \cite{ozawapopacartanI}.]\label{thm:hypertrace}  Let $\HH$ be an $N$-$M$ correspondence with $N$ finite with normal faithful trace $\tau$ and $M$ a $\rm II_1$ factor.  Let $P \subset N$ be a von Neumann subalgebra. Then the following are equivalent:

\begin{enumerate}

\item there exists a net $(\xi_n)$ in $\HH \otimes_M \bar\HH$ such that $\ip{x\xi_n}{\xi_n} \rightarrow \tau(x)$ for all $x \in N$ and $\nor{[u,\xi_n]} \rightarrow 0$ for all $u \in \UU(P)$;

\item there exists a $P$-central state $\Phi$ on $\NN$ such that $\Phi$ is normal when restricted to $N$ and faithful when restricted to $\ZZ(P' \cap N)$;

\item there exists a $P$-central state $\Phi$ on $\NN$ which restricts to $\tau$ on $N$.
\end{enumerate}
\end{thm}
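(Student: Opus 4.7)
The plan is to establish the cycle $(1) \Rightarrow (3) \Rightarrow (2) \Rightarrow (1)$, in parallel with the proof of Theorem 2.1 in \cite{ozawapopacartanI}. Observe first that every $T \in \NN$ induces an operator $T \otimes_M 1$ on $\HH \otimes_M \bar\HH$, since $T$ commutes with the right $M$-action defining the tensor product; consequently vector functionals of $\HH \otimes_M \bar\HH$ give well-defined states on $\NN$.

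For $(1) \Rightarrow (3)$, let $\Phi$ be a weak-$*$ cluster point of the normal states $T \mapsto \ip{(T \otimes_M 1)\xi_n}{\xi_n}$ on $\NN$. The condition $\ip{x\xi_n}{\xi_n} \to \tau(x)$ yields $\Phi|_N = \tau$, while $\nor{[u,\xi_n]} \to 0$ for $u \in \UU(P)$ yields $P$-centrality of $\Phi$. The implication $(3) \Rightarrow (2)$ is immediate, since $\tau$ is normal on $N$ and faithful on the subalgebra $\ZZ(P' \cap N) \subset N$.

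The substance lies in $(2) \Rightarrow (1)$, for which the strategy is a Day--Namioka convexity argument combined with a positive-cone analysis. First, using the normality of $\Phi|_N$ and the faithfulness of $\Phi|_{\ZZ(P' \cap N)}$, approximate $\Phi$ in the weak-$*$ topology of $\NN^*$ by normal states of the form $T \mapsto \ip{(T \otimes_M 1)\xi}{\xi}$ whose restrictions to $N$ also approximate $\tau$; the faithfulness hypothesis ensures that the approximating vectors do not degenerate on any central projection of $P' \cap N$. Second, the $P$-centrality of $\Phi$ implies that for each $u \in \UU(P)$ the difference $\omega_{u\xi u^*} - \omega_\xi$ tends weakly to zero in $\NN_*$, so Mazur's theorem produces convex combinations of such vectors along which this convergence is in norm. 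Finally, realize the approximating vectors inside the natural positive cone of the standard representation and invoke a Powers--Stormer-type inequality to convert norm convergence of positive vector functionals into norm convergence of the underlying vectors, yielding the required net.

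The principal obstacle is the need to maintain both asymptotic conditions of (1) simultaneously. The Mazur step readily gives near-invariance $\nor{u\xi u^* - \xi} \to 0$, but it is subtler to ensure that the vector states also return the trace $\tau$ on $N$ in the limit. This is precisely the role of the faithfulness of $\Phi$ on $\ZZ(P' \cap N)$: it guarantees that the vector implementing $\Phi$ in the natural cone has the appropriate support, so the tracial behavior on $N$ is preserved under the passage to the limit.
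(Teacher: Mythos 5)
Your implications $(1)\Rightarrow(3)$ and $(3)\Rightarrow(2)$, and the Day--Namioka/Mazur plus Powers--St\o rmer machinery you invoke for producing the net in (1) from a $P$-central state that is \emph{tracial} on $N$, all match the paper's argument (the paper carries this out in $\LL^2(\NN,\bar\tau)\cong\HH\otimes_M\bar\HH$, which is exactly the ``natural positive cone'' you have in mind). The problem is your $(2)\Rightarrow(1)$ step, which skips the real content of the theorem. Condition (2) only gives a state $\Phi$ whose restriction to $N$ is \emph{some} normal state, say $\Phi|_N=\tau(\eta\,\cdot)$ with $\eta\in\LL^1(P'\cap N)_+$; it need not be anywhere near $\tau$. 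If you approximate $\Phi$ weak-$*$ by vector states, their restrictions to $N$ approximate $\Phi|_N$, not $\tau$, so your claim that the approximants ``also approximate $\tau$'' is false as stated. Faithfulness of $\Phi$ on $\ZZ(P'\cap N)$ only controls the central support of $\eta$; it does not make $\eta$ close to $1$, and having ``appropriate support'' does not preserve any ``tracial behavior on $N$'' in the limit. (Take $P=\C1$ with $N$ a factor: then $\ZZ(P'\cap N)=\C$, faithfulness is automatic, and $\Phi|_N$ can be an arbitrary normal state.)

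What is missing is the averaging-and-cutting argument, i.e.\ the reduction $(2)\Rightarrow(3)$: set $\eta_F=\frac{1}{|F|}\sum_{u\in F}u\eta u^*$ over finite $F\subset\UU(P'\cap N)$, cut by the spectral projections $\xi_{F,\e}=\chi_{[\e,\infty)}(\eta_F)^{1/2}$, and form the compressed, re-averaged states $\Psi_{F,\e}(T)=\frac{1}{|F|}\sum_{u\in F}\Phi(u\xi_{F,\e}T\xi_{F,\e}u^*)$, which remain $P$-central. The averaging pushes $\eta_F$ toward its central-valued trace, and faithfulness of $\Phi$ on $\ZZ(P'\cap N)$ is used exactly once: to guarantee that $\chi_{[\e,\infty)}(\eta_F)$ converges to the central support $z$ of $\eta$ and that $z=1$, so a weak cluster point of the $\Psi_{F,\e}$ restricts to $\tau$ on $N$. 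Only after this correction can you run your Mazur/positive-cone argument to get from (3) to (1). As written, your proof proposal asserts the conclusion of this step without performing it, so the cycle does not close.
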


\noindent We do so by constructing a normal, faithful, semi-finite (tracial) weight $\bar\tau$ on $\NN$ which canonically realizes $\HH \otimes_M \bar \HH$ as $\LL^2(\NN,\bar\tau)$.  An identical construction to the one we propose has already appeared in the work of Sauvageot \cite{sauvageottensor} for an arbitrary factor $M$. However, we present an elementary approach in the $\rm II_1$ case.

Recall that a vector $\xi \in \HH$ is \textit{right bounded} if there exists $C >0$ such that for all $x \in M$, $\nor{\xi x} \leq C\nor{x}_2$.  The right-bounded vectors form a dense subspace of $\HH$ which we will denote by $\HH_b$.  Regarding $\HH$ as a right Hilbert $M$-module, we can define a natural $M$-valued inner product on $\HH_b$, which we will denote $(\xi | \eta) \in M$ for $\xi, \eta \in \HH_b$, by setting $(\xi|\eta)$ to be the Radon-Nikodym derivative of the normal functional $x \mapsto \ip{\xi x}{\eta}$.  Then it is easy to see that $(\cdot|\cdot)$ satisfies the following properties for all $\xi,\eta \in \HH_b,\ x,y \in M$: 
\begin{enumerate}
\item $(\xi|\xi) \geq 0$,
\item $(\eta | \xi) = (\xi |\eta)^*$,
\item $(\xi x | \eta y) = y^*(\xi | \eta)x$ 
\end{enumerate}

\noindent (i.e., $(\HH_b, (\cdot|\cdot))$ is an \textit{$M$-rigged space} in the sense of Rieffel \cite{rieffelmorita}). Trivially, we have that $\ip{\xi}{\eta} = \tau((\xi|\eta))$.  Also, by the non-degeneracy of $\pi_{M^o}$, $(\xi|\xi) = 0$ only if $\xi = 0$. Moreover, $(\cdot|\cdot)$ satisfies a noncommutative Cauchy-Schwartz inequality:
\begin{enumerate}
\item[(4)] $(\eta|\xi)(\xi|\eta) \leq \nor{(\xi|\xi)}_\infty (\eta|\eta)$
\end{enumerate}
(cf. \cite{rieffelinduced}, Proposition 2.9).

Let $\NN = \B(\HH) \cap \pi_{M^o}(M^o)'$. For $\xi,\eta \in \HH_b$, let $T_{\xi,\eta}: \HH_b \rightarrow \HH_b$ be the ``rank-one operator'' given by $T_{\xi,\eta}(\, \cdot \,) = \xi(\, \cdot \,|\eta)$.  Then $T_{\xi,\eta}$ extends to a bounded operator with $\nor{T_{\xi,\eta}}_\infty \leq \nor{(\xi|\xi)}_\infty \nor{(\eta|\eta)}_\infty$ \cite{rieffelinduced}.  Notice that $T_{\xi,\xi} \geq 0$ and that $T_{\xi,\xi}$ is a projection if $(\xi|\xi) \in \mathcal P(M)$. Since $T_{\xi,\eta} \, \pi_{M^o}(x) = \pi_{M^o}(x) \, T_{\xi,\eta}$ for all $x \in M^o$, we have that $T_{\xi,\eta} \in \NN$.  It is easy to see that the span of all such operators $T_{\xi,\eta}$ is a $\ast$-subalgebra of $\B(\HH)$ which we will denote by $\NN_f$.  Noticing that for any $S \in \NN$, $S(\HH_b) \subset \HH_b$, we have that $S \, T_{\xi,\eta} = T_{S\xi,\eta}$ and $ T_{\xi,\eta} \, S = T_{\xi, S^\ast\eta}$. It follows that $\NN_f$ is an ideal of $\NN$ which can be considered as the analog of the finite-rank operators in $\B(\HH)$. The following lemma further cements this analogy.

\begin{lem}\label{thm:density}  If $M$ is a $\rm II_1$ factor, we have that $\NN_f' \cap \B(\HH) = \pi_{M^o}(M^o)$; hence, $\NN_f + \C \, 1_{\B(\HH)}$ is weakly dense in $\NN$.
\end{lem}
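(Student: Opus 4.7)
The plan is to identify $\NN_f$ as a nonzero two-sided $\ast$-ideal of the factor $\NN$, conclude from factoriality that its weak closure is all of $\NN$, and then read off both statements. The key structural input is that $\NN$ itself is a factor whenever $M$ is.

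The inclusion $\pi_{M^o}(M^o) \subseteq \NN_f' \cap \B(\HH)$ is immediate, since each $T_{\xi,\eta}$ already lies in $\NN = \pi_{M^o}(M^o)'$. For the reverse, note that because $M^o$ is a factor, the normal unital $\ast$-representation $\pi_{M^o}$ has trivial kernel (its kernel would be a weakly closed two-sided ideal of $M^o$), so $\pi_{M^o}(M^o) \cong M^o$ is a $\rm II_1$ subfactor of $\B(\HH)$. Its commutant $\NN$ then shares the same center $\ZZ(\NN) = \NN \cap \NN' = \ZZ(\pi_{M^o}(M^o)) = \C\cdot 1$, so $\NN$ is a factor.

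The identities $S\, T_{\xi,\eta} = T_{S\xi,\eta}$ and $T_{\xi,\eta}\, S = T_{\xi,S^\ast\eta}$ for $S \in \NN$ recorded just before the lemma show that $\NN_f$ is a two-sided $\ast$-ideal of $\NN$. It is nonzero: for any $\xi \in \HH_b \setminus \{0\}$ the nondegeneracy noted in property (1) gives $(\xi|\xi) \neq 0$, whence $T_{\xi,\xi}(\xi) = \xi(\xi|\xi) \neq 0$. The weak closure $\overline{\NN_f}^{\mathrm{wo}}$ is then a weakly closed two-sided ideal of the factor $\NN$, and every such ideal has the form $\NN z$ for a central projection $z$; since $\ZZ(\NN) = \C\cdot 1$ and the ideal is nonzero, we must have $z = 1$, so $\overline{\NN_f}^{\mathrm{wo}} = \NN$.

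Both conclusions follow at once. Commutants are preserved under weak closure, giving $\NN_f' \cap \B(\HH) = \NN' = \pi_{M^o}(M^o)$. Moreover, $\NN_f + \C\, 1_{\B(\HH)}$ is a unital $\ast$-subalgebra of $\NN$, so by the von Neumann bicommutant theorem its weak closure equals its bicommutant, which is $\NN_f'' = \NN$. The only substantive step is the factoriality of $\NN$: without it, a nonzero weakly closed two-sided ideal could be proper, and the dichotomy used above would break down. Everything else is essentially bookkeeping built on the algebraic identities already established in the paper.
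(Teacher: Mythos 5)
Your proof is correct, but it takes a genuinely different route from the paper's. The paper argues ``from below'': it takes $T \in \NN_f' \cap \B(\HH)$, picks a right-bounded $\zeta$ with $(\zeta|\zeta)$ a projection $p$, shows by an approximation estimate that $T$ acts on $\HH p$ as right multiplication by some $y \in M$, and then uses the $\rm II_1$ factoriality of $M$ (replacing $\zeta$ by $\zeta u$ and averaging) to globalize this to $T = \pi_{M^o}(y_T^o)$; the weak density of $\NN_f + \C 1$ is then read off via the bicommutant theorem. You instead argue ``from above'': $\NN = \pi_{M^o}(M^o)'$ is a factor because $\pi_{M^o}(M^o)$ is, $\NN_f$ is a nonzero weakly closed-able two-sided $\ast$-ideal, and a WOT-closed two-sided ideal of a factor is $0$ or everything; you then recover the commutant identity from $\NN_f' = (\overline{\NN_f}^{\mathrm{wo}})' = \NN' = \pi_{M^o}(M^o)$. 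Both arguments are sound and both ultimately rest on factoriality of $M$. Yours is shorter, proves the formally stronger statement that $\NN_f$ itself (not just $\NN_f + \C 1$) is weakly dense, and uses nothing about $M$ beyond factoriality once the rigged-module machinery is in place; the cost is importing the standard structure theory of $\sigma$-weakly closed ideals in von Neumann algebras. The paper's computation is more self-contained and has the side benefit of explicitly exhibiting each element of $\NN_f' \cap \B(\HH)$ as a right multiplication operator, which is closer in spirit to how the algebra $\NN_f$ is used later as an analogue of the finite-rank operators.
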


\begin{proof} The inclusion $\pi_{M^o}(M^o) \subset \NN_f' \cap \B(\HH)$ is trivial.  Conversely, let $T \in \NN_f' \cap \B(\HH)$ and choose a non-zero $\zeta \in \HH_b$ such that $(\zeta|\zeta) = p \in \mathcal P(M)$. (One can always find such a $\zeta$ as $\HH$ has an orthonormal basis of right bounded vectors as a right Hilbert $M$-module.) Choose a sequence $\eta_i \in \HH_b$ such that $\nor{\eta_i - T\zeta} \rightarrow 0$ and let $y_i = (\eta_i|\zeta)$.  Then for every $\xi \in \HH_b$ we have that  $\nor{T(\xi p) - \xi y_i} \leq \nor{T_{\xi,\zeta}}_\infty \nor{\eta_i - T\zeta} \leq \nor{(\xi|\xi)}_\infty \nor{\eta_i - T\zeta}$; so, the sequence $(y_i)$ is $\nor{\cdot}_\infty$-bounded and has a strong cluster point.  Hence, there exists $y \in M$ such that $T(\xi p) = \xi y$ for all $\xi \in \HH$.  Since $M$ is a ${\rm II_1}$ factor, by repeating the argument with $\zeta' = \zeta u$ for $u \in \UU(M)$ and using standard averaging techniques, we conclude that there exists $y_T \in M$ such that $T\xi = \xi y_T$ for all $\xi \in \HH$.  Thus $T = \pi_{M^o}(y_T^o)$.
\end{proof}

Now, consider an element $\phii \in M_\ast$, and define a functional $\bar \phii \in (\NN_f)^\ast$ by $\bar\phii(T_{\xi,\eta}) = \phii((\xi|\eta))$.  It is easy to see that $\bar\phii$ is normal on $\NN_f$ and so, by the preceding lemma, may be extended to a normal semi-finite weight on $\NN$.  Hence, we may construct for each such $\phii$ a noncommutative $L^p$-space over $\NN$, $\LL^p(\NN,\bar\phii) = \{T \in \NN : \nor{T}_p = \bar\phii(\abs{T}^p)^{1/p} < \infty\}$.  If $M$ is a $\rm II_1$ factor with trace $\tau$, then $\bar\tau$ is a normal, faithful, semi-finite trace on $\NN$ and we denote $\LL^p(\NN,\bar\tau)$ simply by $\LL^p(\NN)$. In the case of $\LL^2(\NN)$, we compute that $\nor{T_{\xi,\eta}}_2 = \tau((\xi|\xi)(\eta|\eta)) = \ip{\xi(\eta|\eta)}{\xi}$.  This shows that the map which sends $T_{\xi,\eta}$ to the elementary $M$-tensor $\xi \otimes_M \bar \eta \in \HH \otimes_M \bar \HH$ extends to an $N$-$N$ bimodular Hilbert space isometry from $\LL^2(\NN)$ to $\HH \otimes_M \bar \HH$. We are now ready to prove the motivating result in this section.

\begin{proof}[Proof of Theorem \ref{thm:hypertrace}]  The proof of (1) $\Leftrightarrow$ (3) follows the usual strategy.  For (1) $\Rightarrow$ (3), we have that there exists a net $(\xi_n)$ of vectors in $\HH \otimes_M \bar \HH$ such that $\ip{x\xi_n}{\xi_n} \rightarrow \tau(x)$ for all $x \in N$ and $\nor{[u,\xi_n]} \rightarrow 0$ for all $u \in \UU(P)$. Viewing $\xi_n$ as an element of $\LL^2(\NN)$, let $\Phi_n \in \NN_\ast$ be given by $\Phi_n(T) = \bar\tau(\xi_n^\ast\xi_n \, T)$ for any $T \in \NN$.  Then, by the generalized Powers-St\o rmer inequality (Theorem IX.1.2 in \cite{takesakiii}), we have that $\abs{\Phi_n(x) - \tau(x)} \rightarrow 0$ for all $x \in N$ and $\nor{\Ad(u)\Phi_n - \Phi_n}_1 \rightarrow 0$ for all $u \in \UU(P)$.  Taking a weak cluster point of $(\Phi_n)$ in $\NN^\ast$ gives the required $N$-tracial $P$-central state on $\NN$.  Conversely, given such a state $\Phi$, we can find a net $(\eta_n)$ in $\LL^1(\NN)_+$ such that $\Phi_n(T) = \bar\tau(\eta_n T)$ weakly converges to $\Phi$. In fact, by passing to convex combinations we may assume $\nor{[u,\eta_n]}_1 \rightarrow 0$ for all $u \in \UU(P)$. By another application of the generalized Powers-St\o rmer inequality, it is easy to check that $\xi_n = \eta_n^{1/2} \in \LL^2(\NN) \cong \HH \otimes_M \bar \HH$ satisfies the requirements of (1).

We now need only show (2) $\Rightarrow$ (3) as (3) $\Rightarrow$ (2) is trivial.  But this is exactly the averaging trick found in the proof of (2) $\Rightarrow$ (1) in Theorem 2.1 of \cite{ozawapopacartanI}. We repeat the argument here for the sake of completeness.  Since $\Phi$ is normal on $N$, we have that for some $\eta \in \LL^1(N)_+$, $\Phi(x) = \tau(\eta x)$ for all $x \in N$.  In fact, $\eta \in \LL^1(P' \cap N)_+$ since $\Phi$ is $P$-central.  Denoting by $\FF$ the net of finite subsets of $\UU(P' \cap N)$ under inclusion, for any $\e > 0$, we set $\eta_F = \frac{1}{\abs{F}} \sum_{u \in F} u \eta u^\ast$ and $\xi_{F,\e} = (\chi_{[\e,\infty)}(\eta_F))^{1/2}$.  We now let $\Psi_{F,\e}(T) = \frac{1}{\abs{F}}\sum_{u \in F} \Phi(u \xi_{F,\e} T \xi_{F,\e} u^\ast)$.  Note that $\Psi_{F,\e}$ is still $P$-central.  Now it is easy to see that $\lim_{\FF, \e}\chi_{[\e,\infty)}(\eta_F) = z$, where $z$ is the central support of $\eta$.  But by the faithfulness of $\Phi$ on $\ZZ(P' \cap N)$, we see that $z = 1$.  Hence, any weak cluster point of $(\Psi_{F,\e})_{\FF,\e}$ in $\NN^\ast$ is a $P$-central state which when restricted to $N$ is $\tau$.
\end{proof}

\begin{cor}[generalized Haagerup's criterion for amenability]\label{cor:haagerup} Let $N$, $M$ be $\rm II_1$ factors and $\HH$ an $N$-$M$ correspondence.  If $P \subset N$ is a von Neumann subalgebra, then $\HH$ is left amenable over $P$ (in the sense of Theorem \ref{thm:hypertrace}) if and only if for every non-zero projection $p \in \ZZ(P' \cap N)$ and finite subset $F \subset \UU(P)$, we have \[\nor{\sum_{u \in F} up \otimes \overline{up}}_{\HH \otimes_M \bar \HH,\infty} = \abs{F},\] where $\nor{\,\cdot\,}_{\HH \otimes_M \bar \HH, \infty}$ denotes the operator norm on $\B(\HH\otimes_M\bar\HH)$.
\end{cor}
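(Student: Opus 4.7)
My plan is to adapt Haagerup's classical amenability criterion to the bimodule setting, using Theorem~\ref{thm:hypertrace} as the bridge between the operator-norm hypothesis and the existence of $P$-central states on $\NN$. Two structural observations drive the argument. First, since $p\in\ZZ(P'\cap N)$ commutes with every $u\in\UU(P)$, the operator $T:=\sum_{u\in F}up\otimes\overline{up}$ acts on $\HH\otimes_M\bar\HH$ by $\xi\mapsto\sum_u p\,u\xi u^{*}\,p$; in particular $T$ vanishes on the orthogonal complement of $p(\HH\otimes_M\bar\HH)p$ and agrees on that subspace with the conjugation operator $\xi\mapsto\sum_u u\xi u^{*}$. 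Second, each summand $up\otimes\overline{up}$ is a partial isometry of norm $1$, giving the immediate upper bound $\nor{T}_\infty\leq|F|$ from the triangle inequality.

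For the direction $(\Rightarrow)$, apply Theorem~\ref{thm:hypertrace}(1) to obtain a net $(\xi_n)$ with $\ip{x\xi_n}{\xi_n}\to\tau(x)$ for $x\in N$ and $\nor{[u,\xi_n]}\to 0$ for $u\in\UU(P)$. Near-centrality together with the first observation gives $T\xi_n\approx|F|\,p\xi_n p$. By Theorem~\ref{thm:hypertrace}(2) the associated state is faithful on $\ZZ(P'\cap N)$, hence $\Phi(p)>0$; choosing the $\xi_n$ inside $p(\HH\otimes_M\bar\HH)p$ with $\nor{\xi_n}\to 1$ (permissible after the usual normal compression) yields $\nor{T\xi_n}/\nor{\xi_n}\to|F|$, establishing $\nor{T}_\infty\geq|F|$.

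For the direction $(\Leftarrow)$, fix a non-zero $p\in\ZZ(P'\cap N)$. By the first observation the hypothesis $\nor{T}_\infty=|F|$ is precisely Haagerup's classical norm condition for the conjugation action of $\UU(Pp)$ on the bimodule $p(\HH\otimes_M\bar\HH)p$. The standard expansion of $\nor{T\xi}^{2}$ for a near-extremal unit vector, combined with the fact that each summand $u\xi u^{*}$ has unit norm forcing most cross inner products $\ip{u\xi u^{*}}{v\xi v^{*}}$ close to $1$, yields---via the usual convexity/averaging trick, for each $\e>0$ and finite $F_0\subset\UU(P)$---a unit vector $\xi_{p,F_0,\e}\in p(\HH\otimes_M\bar\HH)p$ with $\nor{[u,\xi_{p,F_0,\e}]}<\e$ for all $u\in F_0$. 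A weak-$*$ cluster point of the associated vector states produces a $P$-central state $\Phi_p$ on $\NN$, normal on $N$, with $\Phi_p(p)=1$.

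To invoke Theorem~\ref{thm:hypertrace}(2) the local states $\Phi_p$ must be combined into a single $P$-central state $\Phi$ on $\NN$ faithful on $\ZZ(P'\cap N)$. Using Zorn's lemma, select a maximal family $\{p_i\}$ of pairwise orthogonal non-zero projections in $\ZZ(P'\cap N)$ each admitting a $\Phi_i$ as above which is additionally \emph{faithful} on $\ZZ(P'\cap N)p_i$; this faithfulness is obtained by replacing $p$ with the support projection (inside $\ZZ(P'\cap N)p$) of the normal restriction of $\Phi_p$, a projection which is non-zero since $\Phi_p(p)=1$. Maximality together with the hypothesis applied to $1-\sum p_i$ forces $\sum p_i=1$; separability of $N$ permits reduction to a countable family, and $\Phi:=\sum_i 2^{-i}\Phi_i$ then has $\Phi(q)\geq 2^{-i}\Phi_i(qp_i)>0$ for any non-zero $q\in\ZZ(P'\cap N)$ and some $i$ with $qp_i\neq 0$. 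The main technical obstacle is precisely this gluing argument---the support-refinement of each $\Phi_{p_i}$ and the verification that the infinite weighted sum preserves normality on $N$ while achieving faithfulness on $\ZZ(P'\cap N)$.
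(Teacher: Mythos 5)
Your overall architecture is the one the paper intends: the paper's own proof is a one\--line deferral to ``the same arguments as in Lemma 2.2 of Haagerup'' made possible by Theorem \ref{thm:hypertrace}, and your reconstruction (triangle\--inequality upper bound, the identification of $\sum_{u\in F}up\otimes\overline{up}$ with $p$\--cut conjugation, the expansion of $\nor{T\xi}^2$ to extract almost\--central vectors after adjoining $1$ to $F$, and the maximality/gluing argument to get faithfulness on $\ZZ(P'\cap N)$ --- which is exactly the ``simple maximality argument'' the paper uses in Theorem \ref{thm:amenablenormalizer}) is a faithful expansion of that. The forward direction is fine modulo the standard localization of the net to the corner $p(\HH\otimes_M\bar\HH)p$, which does require the two\--sided normalization $\ip{x\xi_n}{\xi_n}=\tau(pxp)/\tau(p)=\ip{\xi_n x}{\xi_n}$ from Proposition 3.2 of \cite{ozawapopacartanI} rather than condition (1) alone; you wave at this with ``the usual normal compression,'' which is acceptable.

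The genuine gap is in the converse, at the sentence ``a weak\--$*$ cluster point of the associated vector states produces a $P$\--central state $\Phi_p$ on $\NN$, \emph{normal on $N$}.'' A weak\--$*$ cluster point of normal states is in general not normal, and normality on $N$ is precisely the hypothesis that makes condition (2) of Theorem \ref{thm:hypertrace} usable (it is what produces the density $\eta\in\LL^1(P'\cap N)_+$ that drives the averaging in (2)$\Rightarrow$(3)). In Haagerup's original Lemma 2.2 this issue is invisible: there the limiting state is $M$\--central on $\B(L^2M)$, so its restriction to the $\rm II_1$ factor $M$ is a tracial state, hence equals $\tau$ and is automatically normal. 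Here the state is only $P$\--central with $P\subset N$ a proper subalgebra, so no such rigidity is available, and the extremal vectors produced by the norm expansion carry no uniform left\--boundedness bound $\nor{x\xi}\leq C\nor{x}_2$ that would force normality of the limit (contrast with Theorem \ref{thm:amenablenormalizer}, where the vectors $\xi_{p,F}$ are constructed with exactly such a bound and normality of $\Phi_p$ on $M$ is deduced from it). To close the gap you must either (a) arrange uniform left\--boundedness of the almost\--central vectors before passing to the limit, or (b) work at the level of densities $\xi\xi^*\in\LL^1(\NN)_+$, use Powers--St\o rmer and a Day\--type convexity argument to get norm\--almost\--invariance in $\LL^1(\NN)$, and then feed the resulting $\LL^2$\--vectors back through the corner\--localized averaging of the proof of Theorem \ref{thm:hypertrace} to recover the tracial condition of (1). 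As written, the proposal asserts the conclusion of this step rather than proving it, and it is the one point where the adaptation from the absolute to the relative setting is not routine.
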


\begin{proof} Since we have obtained a ``hypertrace'' characterization of amenability for correspondences in Theorem \ref{thm:hypertrace}, the result follows by the same arguments as in Lemma 2.2 in \cite{haagerupinjectivity}.
\end{proof}

\begin{defn}[cf. Definition 1.3 in \cite{petersonsinclair}] Let $M$ be a $\rm II_1$ factor, $\HH$ an $M$-$M$ correspondence and $P^c$ the orthogonal projection onto the subspace $\HH^c = \{\xi \in \HH : x\xi = \xi x,\ \forall x \in M\}$. The correspondence $\HH$ has \textit{spectral gap} if for every $\e > 0$ there exist $\delta > 0$ and $x_1,\dotsc,x_n \in M$ such that if $\nor{x_i \xi - \xi x_i} < \delta$, $i = 1,\dotsc,n$, then $\nor{\xi - P^c\xi} < \e$.  The correspondence $\HH$ has \textit{stable spectral gap} if $\HH \otimes_M \bar\HH$ has spectral gap.
\end{defn}

Note that if $\HH$ has stable spectral gap, then $\HH$ is amenable if and only if $(\HH\otimes_M \bar\HH)^c \neq \{0\}$.  Hence, we say an $M$-$M$ correspondence $\HH$ is \textit{nonamenable} if it has stable spectral gap and $(\HH \otimes_M \bar\HH)^c = \{0\}$. The following theorem is the analog of Lemma 3.2 in \cite{popaspectralgap} for the category of correspondences. N.B. Stable spectral gap as defined in \cite{popaspectralgap} corresponds to our definition of nonamenability.

\begin{thm}\label{thm:nonamenable} Let $M$ be a $\rm II_1$ factor and $\HH$ an $M$-$M$ correspondence.  Then $\HH$ is nonamenable if and only if $\HH \otimes_M \bar\KK$ has spectral gap and for any $M$-$M$ correspondence $\KK$.
\end{thm}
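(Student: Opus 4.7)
The plan is to use the hypertrace characterization of amenability (Theorem \ref{thm:hypertrace}) to transfer approximately $M$-central unit vectors from $\HH \otimes_M \KK$ back to $\HH \otimes_M \bar\HH$. The $(\Leftarrow)$ direction is a direct specialization $\KK = \bar\HH$, which delivers both conditions constituting nonamenability of $\HH$ (stable spectral gap together with $(\HH \otimes_M \bar\HH)^c = \{0\}$).

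For the $(\Rightarrow)$ implication I would argue by contraposition. Assume a net $\xi_n \in \HH \otimes_M \KK$ of unit vectors satisfies $\nor{x\xi_n - \xi_n x} \to 0$ for every $x \in M$. Every $T \in \NN := \B(\HH) \cap \pi_{M^o}(M^o)'$ commutes with the right $M$-action on $\HH$ over which the interior tensor is formed, hence extends canonically to an operator $T \otimes 1 \in \B(\HH \otimes_M \KK)$; since the right $M$-action on $\HH \otimes_M \KK$ lives entirely on the $\KK$-factor, $T \otimes 1$ commutes with it, and in particular $M$ embeds into $\NN$ through its left action on $\HH$. Define states $\phi_n$ on $\NN$ by $\phi_n(T) = \ip{(T \otimes 1)\xi_n}{\xi_n}$. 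Using approximate centrality of $\xi_n$ and the fact that the right $M$-action on $\HH \otimes_M \KK$ is implemented by isometries, a direct computation yields $\phi_n([x, T]) \to 0$ for all $x \in M,\ T \in \NN$, and shows $\phi_n|_M$ to be asymptotically tracial. Any weak-$\ast$ cluster point $\Phi$ is then an $M$-central state on $\NN$ whose restriction to the $\rm II_1$ factor $M$ is tracial, hence $\Phi|_M = \tau$ by uniqueness of the trace on a $\rm II_1$ factor. Applying Theorem \ref{thm:hypertrace} (the implication $(3) \Rightarrow (1)$ with $P = N = M$) then produces approximately $M$-central unit vectors in $\HH \otimes_M \bar\HH$, witnessing amenability of $\HH$ and contradicting nonamenability.

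The main technical obstacle will be the careful bookkeeping of how the left and right $M$-actions interact on $\HH \otimes_M \KK$ — specifically, verifying that $\phi_n$ is asymptotically $M$-central on all of $\NN$ (and not merely on its embedded copy of $M$) and that $\phi_n|_M$ is asymptotically tracial. Both facts rely on the isometricity of the right $M$-action on $\HH \otimes_M \KK$ and on the commutation of $T \otimes 1$ with that action. Once these computations are in hand, the remainder of the proof is a routine weak-$\ast$ compactness argument together with an appeal to Theorem \ref{thm:hypertrace}.
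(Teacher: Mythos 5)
Your argument for the forward implication is sound, but it takes a genuinely different route from the paper's. The paper realizes $\HH \otimes_M \bar\KK$ as a space $\LL^2(\HH,\KK)$ of Hilbert--Schmidt-type right $M$-linear operators and transfers an almost central unit vector $\xi$ \emph{directly} to $\HH \otimes_M \bar\HH \cong \LL^2(\HH)$ via $\eta = (\xi^*\xi)^{1/2}$, using the generalized Powers--St\o rmer inequality to control the commutators; this is short, quantitative (the commutator norms degrade only from $\e$ to $\sqrt{2\e}$), and makes no appeal to Theorem \ref{thm:hypertrace}. You instead induce each $T \in \NN$ to $T \otimes 1$ on $\HH \otimes_M \KK$, build asymptotically $M$-central states $\phi_n$, take a weak-$*$ cluster point (which restricts to $\tau$ on $M$ by uniqueness of the tracial state on a $\rm II_1$ factor), and feed it into the implication $(3) \Rightarrow (1)$ of Theorem \ref{thm:hypertrace}. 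This is a legitimate and conceptually clean repackaging --- it exhibits failure of spectral gap of $\HH \otimes_M \bar\KK$ as a direct witness of left amenability of $\HH$ over $M$ --- but it is not more elementary, since the $(3) \Rightarrow (1)$ step of Theorem \ref{thm:hypertrace} itself runs through the same Powers--St\o rmer machinery, and you must still verify that $T \mapsto T \otimes 1$ is a well-defined unital positive map (Rieffel induction), which is exactly the bookkeeping the paper's $\LL^2(\HH,\KK)$ formalism handles. Two points need tightening. First, failure of spectral gap only hands you vectors with $\nor{\xi - P^c\xi} \geq \e$; you should first replace $\xi$ by the normalization of $\xi - P^c\xi$ (still almost central, since $P^c\xi$ is exactly central) to obtain your net of unit vectors. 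Second, and more substantively, in the converse direction specializing to $\KK = \bar\HH$ gives stable spectral gap but does \emph{not} by itself give $(\HH \otimes_M \bar\HH)^c = \{0\}$ --- spectral gap is perfectly compatible with nonzero central vectors --- so that half of nonamenability requires a separate (easy) observation, e.g.\ that a central unit vector in $\HH \otimes_M \bar\HH$ yields an $M$-bimodular isometry of $L^2(M)$ into it, which lets one embed an arbitrary correspondence $\LL$ as a sub-bimodule of some $\HH \otimes_M \bar\KK$ and thereby defeat spectral gap; the paper also dismisses this direction as trivial, but your stated reason is not the right one.
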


\begin{proof} Let $\HH_b$, $\KK_b$ denote subspaces of right-bounded vectors in $\HH$ and $\KK$, respectively.  Given $\xi \in \HH_b$ and $\eta \in \KK_b$, by the same arguments as above we can define a bounded operator $T_{\xi,\eta} : \KK \rightarrow \HH$ by $T_{\xi,\eta}(\, \cdot \,) = \xi(\, \cdot \,|\eta)$.  As above, one may check that $\nor{(T^*T)^{1/2}}_2 = \nor{\xi \otimes_M \bar\eta} = \nor{(TT^*)^{1/2}}_2$ so that $\HH \otimes_M \KK$ is isometric to a Hilbert-normed subspace of the bounded right M-linear operators from $\HH$ to $\KK$, which we denote $\LL^2(\HH,\KK)$.  Moreover, this identification is natural with respect to the $M$-$M$ bimodular structure on $\LL^2(\HH,\KK)$ given by $x \, T_{\xi,\eta} \, y = T_{x\xi,y^*\eta}$.

We need now only prove the forward implication, as the converse is trivial. Let us fix some arbitrary $M$-$M$ correspondence $\KK$. From Proposition 1.4 in \cite{petersonsinclair}, we have that $(\HH \otimes_M \bar\HH)^c = \{0\}$ if and only if $(\HH \otimes_M \bar\KK)^c = \{0\}$. So, by way of contradiction, we may assume that for every $\e > 0$ and $x_1,\dots,x_n \in M$, there exists a unit vector $\xi \in \HH \otimes_M \bar\KK$ such that $\nor{x_i \xi - \xi x_i}_2 \leq \e$, $i = 1,\dotsc,n$. Without loss of generality, we may assume $x_1,\dots,x_n$ are unitaries. Viewing $\xi$ as an element of $\LL^2(\HH,\KK)$, let $\eta = (\xi^*\xi)^{1/2} \in \LL^2(\HH)$. By the generalized Powers-St\o rmer inequality, we have $\nor{x_i \eta x_i^* - \eta}_2^2 \leq 2 \nor{x_i \xi x_i^* - \xi}_2 \leq 2\e$, $i = 1,\dotsc,n$. Hence, $\eta \in \LL^2(\HH) \cong \HH \otimes_M \bar\HH$ is a unit vector such that $\nor{x_i \eta - \eta x_i}_2 \leq \sqrt{2\e}$, $i = 1,\dotsc,n$. Thus, $\HH \otimes_M \bar\HH$ does not have spectral gap, a contradiction.
\end{proof}

\section{Proofs of Main Theorems}
In this section we prove our main result, from which will follow Theorems \ref{thm:main1} and \ref{thm:main2}. To begin, let $\G$ be an ICC countable discrete group which admits an unbounded cocycle $b : \G \rightarrow \KK$ for some orthogonal representation $\pi : \G \rightarrow \OO(\KK)$.  Let $\G \car^\sigma (X,\mu)$ be the Gaussian construction associated to $\pi$ as described in section \ref{sec:gaussian} and $\{\omega_t : t \in \R\}$ be the one-parameter family of cocycles associated to $b$ as given by Theorem \ref{thm:cocyclefamily}. Let $\alpha_t$ be the $\ast$-automorphism of $\tilde M = L^\infty(X,\mu) \rtimes \G$ defined by $\alpha_t(a u_\g) = a \omega_t(\g) u_\g$ for all $a \in L^\infty(X,\mu), \ \g \in \G$. Finally, we set $M = L\G$, and we denote by $\HH$ the $M$-$M$ bimodule $L_0^2(X,\mu) \otimes \ell^2\G$ with the usual bimodule structure; i.e., the one defined by $u_\g(\xi \otimes \eta) = \sigma(\g)\xi \otimes u_\g\eta$, $(\xi \otimes \eta)u_\g = \xi \otimes (\eta u_\g)$ for all $\xi \in L_0^2(X,\mu), \ \eta \in \ell^2\G$ and $\g \in \G$.

\begin{thm}\label{thm:amenablenormalizer}  With the assumptions and notations as above, suppose $P \subset M$ is a diffuse von Neumann subalgebra such that $\NN_M(P)$ acts weakly compactly on $P$ via conjugation. Let $Q = \NN_M(P)''$. If either: (1) $b$ is a proper cocycle; or (2) $\pi$ is a mixing representation and $\alpha_t$ does not converge $\nor{\cdot}_2$-uniformly to the identity on $(Qp)_1$ for any projection $p \in \ZZ(Q' \cap M)$ as $t \rightarrow 0$, then the $M$-$M$ correspondence $\HH$ is left amenable over $Q$ in the sense of satisfying Theorem \ref{thm:hypertrace} for $Q \subset M$.
\end{thm}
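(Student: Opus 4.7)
The plan is to invoke the implication $(3)\Rightarrow(1)$ of Theorem~\ref{thm:hypertrace} after producing a $Q$-central state $\Phi$ on $\NN = \B(\HH)\cap\pi_{M^o}(M^o)'$ whose restriction to $M$ is the trace $\tau$; such a $\Phi$ is exactly the ``invariant mean'' certifying that $\HH$ is left amenable over $Q$. The state will be assembled by combining the weak-compactness vectors for $P$ with the Parthasarathy--Schmidt deformation $\alpha_t$, passing to a weak-$*$ cluster point as $k\to\infty$ and $t\to 0$.

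First I would extract, from the weak compactness of $\NN_M(P)\curvearrowright P$, a net $(\eta_k)\subset L^2(P\bar\otimes\bar P)_+$ satisfying the three conditions of Definition~\ref{def:weaklycompact}; in particular, $\eta_k$ is asymptotically $(\Ad(u)\otimes\overline{\Ad(u)})$-invariant for every $u\in\NN_M(P)$. Viewing $\eta_k\in L^2(M\bar\otimes\bar M)\hookrightarrow L^2(\tilde M\bar\otimes\bar{\tilde M})$, I would then apply $\alpha_t\otimes\bar\alpha_t$ and project onto the $\HH\bar\otimes\bar\HH$ summand in the decomposition
\[L^2(\tilde M\bar\otimes\bar{\tilde M}) = L^2(M\bar\otimes\bar M)\oplus (L^2(M)\otimes\bar\HH)\oplus(\HH\otimes L^2(\bar M))\oplus(\HH\bar\otimes\bar\HH).\]
Via the $\LL^2(\NN)$ picture of $\HH\otimes_M\bar\HH$ developed in Section~2, these descend to vectors $\xi_k^t\in\HH\otimes_M\bar\HH$ and hence normal positive functionals
\[\Phi_{k,t}(T) = \bar\tau\bigl((\xi_k^t)^{*}\,T\,\xi_k^t\bigr)/\|\xi_k^t\|_2^{2}, \qquad T\in\NN.\]
Trace-preservation of $\alpha_t$ combined with condition~(3) of Definition~\ref{def:weaklycompact} will make any weak-$*$ cluster point restrict to $\tau$ on $M$; for $Q$-centrality, the approximate $(\Ad(u)\otimes\overline{\Ad(u)})$-invariance of $\eta_k$ together with the cocycle identity for $\omega_t$ (Theorem~\ref{thm:gaussian}) should yield an estimate of the shape
\[|\Phi_{k,t}(uTu^{*})-\Phi_{k,t}(T)| \leq \|T\|_{\infty}\bigl(\varepsilon_k(u)+C\,\|\delta_t(u)\|_2/\|\xi_k^t\|_2\bigr), \quad u\in\NN_M(P),\]
where $\delta_t(x) = \alpha_t(x)-E_M(\alpha_t(x))$ satisfies $\|\delta_t(u_{\gamma})\|_2^{2} = 1-e^{-2(t\|b(\gamma)\|)^{2}}$ and $\varepsilon_k(u)\to 0$ by weak compactness.

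The main obstacle, and the reason for the dichotomy between hypotheses (1) and (2), is ensuring that $\|\xi_k^t\|_2$ stays bounded below along a subsequence on which one can simultaneously force $\|\delta_t(u)\|_2\to 0$, so that the cluster point $\Phi$ is at once nontrivial and $Q$-central. Under hypothesis (1), properness of $b$ makes $\phi_t = E_M\circ\alpha_t$ a compact (i.e., $c_0$) Fourier multiplier on $M$ for every $t>0$; the diffuseness of $P$, propagated through the trace condition on $\eta_k$, will then force $(\phi_t\otimes\bar\phi_t)\eta_k\to 0$ as $k\to\infty$, so that $\|\xi_k^t\|_2\to 1$ for each fixed $t$. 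Under hypothesis (2), the failure of uniform convergence of $\alpha_t$ on $(Qp)_1$ for every central projection $p$ directly supplies unitaries $v^{(t)}\in(Qp)_1$ with $\|\alpha_t(v^{(t)})-v^{(t)}\|_2\geq c_0>0$; mixing of $\pi$ is what rules out the possibility that the weak-compactness vectors concentrate on a subalgebra on which $\alpha_t$ does converge uniformly, and a Popa-style spectral-gap argument then keeps $\|\xi_k^t\|_2$ bounded below. Once a $Q$-central cluster point has been extracted, the averaging over $\UU(\ZZ(Q'\cap M))$ at the end of the proof of Theorem~\ref{thm:hypertrace} restores faithfulness on $\ZZ(Q'\cap M)$ if condition~(2) of that theorem is the formulation ultimately needed.
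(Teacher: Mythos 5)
Your overall strategy---weak-compactness vectors, deform them with $\alpha_t$, extract a $Q$-central state on $\NN$ and feed it into Theorem \ref{thm:hypertrace}---is the paper's strategy, but three steps in your outline have genuine problems. First, the vectors you build should not be pushed into $\HH\otimes_M\bar\HH$: the map sending an external tensor in $\HH\bar\otimes\bar\HH$ to the corresponding element of the Connes fusion $\HH\otimes_M\bar\HH$ is not bounded, so your $\xi_k^t$ do not ``descend'' and the formula $\Phi_{k,t}(T)=\bar\tau((\xi_k^t)^*T\xi_k^t)$ is not available. The paper avoids this entirely by deforming only the left leg, producing vectors $\xi_{p,F}\in\HH\otimes L^2(\bar M)$ (as in the proof of Theorem 4.9 of \cite{ozawapopacartanI}) and defining $\Phi_p(T)=\Lim_\FF\nor{p\xi_{p,F}}^{-1}\ip{(T\otimes 1)p\xi_{p,F}}{p\xi_{p,F}}$; a state on $\NN\subset\B(\HH)$ needs only a vector whose left leg lies in $\HH$, and the identification with $\LL^2(\NN)$ is used later, inside Theorem \ref{thm:hypertrace}, not here. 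Second, your claim that the cluster point restricts to $\tau$ on $M$ does not follow: condition (3) of Definition \ref{def:weaklycompact} gives $\ip{(x\otimes1)\eta_k}{\eta_k}=\tau(x)$ only for $x\in P$, and after deforming and projecting there is no reason the vector state is tracial on all of $M$. What one actually gets is normality on $M$ (from $\nor{x\xi_{p,F}}\leq\nor{x}_2$) together with $\Phi_p(p)=1$; faithfulness on $\ZZ(Q'\cap M)$ then comes from a maximality argument over the projections $p$, landing in condition (2) of Theorem \ref{thm:hypertrace}. Your last sentence has the averaging backwards: faithfulness on $\ZZ(Q'\cap M)$ is a hypothesis of the $(2)\Rightarrow(3)$ averaging, not its output.

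The most serious gap is in case (2). The hypothesis is about non-uniform convergence of $\alpha_t$ on $(Qp)_1$, but the weak-compactness vectors only see $P$: the lower bound on the deformed vectors is driven by the failure of uniform convergence of $\alpha_t$ on $(Pp)_1$. Unitaries $v^{(t)}\in(Qp)_1$ witnessing non-uniformity on $Q$ cannot be paired against $\eta_k\in L^2(P\bar\otimes\bar P)$, and ``mixing rules out concentration'' plus ``a Popa-style spectral-gap argument'' is not a mechanism. The actual argument is the contrapositive: if the required vectors fail to exist, then $\alpha_t$ converges uniformly on $(Pp)_1$; since $\delta_t=(1-E_M)\circ\alpha_t$ is a bounded derivation of $M$ into $\HH$ and $\HH$ is a \emph{compact} correspondence when $\pi$ is mixing, Theorem 4.5 of \cite{petersonl2} promotes uniform convergence of $\delta_t$ from $(P)_1$ to the normalizer algebra $(Q)_1$, and Popa's transversality lemma \cite{popaspectralgap} converts this back into uniform convergence of $\alpha_t$ on $(Q)_1$, contradicting the hypothesis. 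Without this transfer step the mixing assumption plays no role in your proof and the dichotomy cannot be closed.
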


\begin{proof} In the case of (1), since $b$ is proper, it is easy to see by formula \ref{eq:cocyclenorm} that $E_{L\G} \circ \alpha_t$ restricted to $L\G$ is compact for all $t > 0$. Hence, by the proof of Theorem 4.9 in \cite{ozawapopacartanI}, for any non-zero projection $p \in \ZZ(Q' \cap M)$ and any finite subset $F \subset \NN_M(P)$, we can find a vector $\xi_{p,F} \in \HH \otimes L^2(\bar M)$ such that $\nor{x \xi_{p,F}} \leq \nor{x}_2$ for all $x \in M$, $\nor{p \xi_{p,F}} \geq \nor{p}_2/8$ and $\nor{[u \otimes \bar u, \xi_{p,F}]} < 1/\abs{F}$ for all $u \in F$.

In the case of (2), we need only demonstrate that our assumptions imply the existence of such a net $(\xi_{p,F})$ as in case (1) then argue commonly for both sets of assumptions. In this direction, it is suffient to show that under the assumptions of (2), $\alpha_t$ does not converge uniformly on $(Pp)_1$ for any projection $p \in \ZZ(Q' \cap M)$. Then, by contradiction if such a net $(\xi_{p,F})$ did not exist, the proof of Theorem 4.9 in \cite{ozawapopacartanI} shows that for some $t >0$ sufficiently small we have $\nor{E_M \circ \alpha_t(up)}_2 \geq \nor{p}_2/4$ for all $u \in \UU(P)$.  But since $\alpha_t$ is a one-parameter strongly-continuous group of automorphisms, we would have uniform convergence of $\alpha_t$ on $\UU(P)p$, and by Kaplansky's density theorem on $(Pp)_1$, as $t \rightarrow 0$.

To conclude the discussion of case (2), we suppose $\pi$ is mixing and $\alpha_t$ converges $\nor{\cdot}_2$-uniformly on $(P)_1$ and show $\alpha_t$ converges uniformly on $(Q)_1$. Since there is a natural trace-preserving automorphism $\beta \in \Aut(\tilde M)$ which pointwise fixes $M$ and such that $\beta \circ \alpha_t = \alpha_{-t} \circ \beta$ for all $t \in \R$ (cf. \cite{petersonsinclair}), by Popa's transversality lemma \cite{popaspectralgap}, it is enough to show that $\nor{\alpha_t(x) - E_M \circ \alpha_t(x)}_2 \rightarrow 0$ uniformly on $(Q)_1$.  Notice that $\delta_t(x) = \alpha_t(x) - E_M \circ \alpha_t(x) = (1 - E_M)(\alpha_t(x) - x)$ is a (bounded) derivation $\delta_t : M \rightarrow \HH$.  Since $\pi$ is mixing, by \cite{petersonw*e} we have that $\HH$ is a compact correspondence; hence, by Theorem 4.5 in \cite{petersonl2} $\delta_t \rightarrow 0$ uniformly in $\nor{\cdot}_2$-norm on $(Q)_1$ as $t \rightarrow 0$.

Now, proceeding commonly for both cases, let $\FF$ be the net of finite subsets of $\NN_M(P)$ under inclusion. We define the state $\Phi_p$ on $\NN = \B(\HH) \cap \pi_{M^o}(M^o)'$ by \[\Phi_p(T) = \Lim_\FF \frac{1}{\nor{p \xi_{p,F}}_2}\ip{(T \otimes 1)p\xi_{p,F}}{p\xi_{p,F}},\] where $\Lim_\FF$ is an arbitrary Banach limit. It is easy to see by the properties of $\xi_{p,F}$ that $\Phi_p$ is normal on $M$. Proceeding as in Lemma 5.3 in \cite{ozawapopacartanII}, we then have that for all $u \in \NN_M(P)$, 
\begin{equation*}
\begin{split}\Phi_p(u^\ast T u) &= \Lim_\FF \frac{1}{\nor{p \xi_{p,F}}_2}\ip{(T \otimes 1)up\xi_{p,F}}{up\xi_{p,F}} \\
&= \Lim_\FF \frac{1}{\nor{p \xi_{p,F}}_2}\ip{(T \otimes 1)p(u \otimes \bar u)\xi_{p,F}}{p(u \otimes \bar u)\xi_{p,F}} \\
&= \Lim_\FF \frac{1}{\nor{p \xi_{p,F}}_2}\ip{(T \otimes 1)p(u \otimes \bar u)\xi_{p,F}(u \otimes \bar u)^\ast}{p(u \otimes \bar u)\xi_{p,F}(u \otimes \bar u)^\ast} \\
&= \Phi_p(T).
\end{split}
\end{equation*}

\noindent Hence, we have that $\Phi_p([x,T]) = 0$ for all $x$ in the span of $\NN_M(P)$ and $T \in \NN$.  But we have that $\abs{\Phi_p(T x)} \leq \nor{T}_\infty \abs{\Lim_\FF \frac{1}{\nor{p \xi_{p,F}}_2}\ip{x p\xi_{p,F}}{p\xi_{p,F}}} \leq 8 \nor{T}_\infty \nor{x}_2$ and similarly for $\abs{\Phi_p(xT)}$.  Thus, by Kaplansky's density theorem we have that $\Phi_p$ is a $Q$-central state.

To summarize, for every non-zero projection $p \in \ZZ(Q' \cap M)$, we have obtained a state $\Phi_p$ on $\NN$ such that $\Phi_p(p) = 1$, $\Phi_p$ is normal on $M$ and $\Phi_p$ is $Q$-central.  A simple maximality argument then shows that there exists a state $\Phi$ on $\NN$ which is normal on $M$, $Q$-central, and faithful on $\ZZ(Q' \cap M)$.  Thus, $\Phi$ satisfies condition (2) of Theorem \ref{thm:hypertrace}, and we are done.
\end{proof}

Keeping with the same notations, assume now that the orthogonal representation $b : \G \rightarrow \OO(\KK)$ is such that there exists an $K > 0$ such that $\pi^{\otimes K}$ is weakly contained in the left regular representation.  As was pointed out in section \ref{sec:gaussian}, the representation induced on $L_0^2(X,\mu)$ by $\G \car^\sigma (X,\mu)$ also has this property. Let $\HH_\sigma = L_0^2(X,\mu)$ so that $\HH = \HH_\sigma \otimes \ell^2\G$ is the $M$-$M$ correspondence induced by the representation $\sigma$. Denote by $\tilde\HH^n$ the $M$-$M$ correspondence $((\HH_\sigma \otimes \bar\HH_\sigma)^{\otimes n}) \otimes \ell^2\G$ with the natural bimodule structure. It is straightforward to check that $\HH \otimes_M \bar\HH \cong \HH_\sigma \otimes \ell^2\G \otimes \bar{\HH_\sigma}$ and that $(\HH \otimes_M \bar\HH) \otimes_M \cdots \otimes_M (\HH \otimes_M \bar\HH)$ for $n+1$ copies is isomorphic to $\HH \otimes_M (\tilde\HH^n) \otimes_M \bar\HH$ as $M$-$M$ bimodules. Hence, the $M$-tensor product of $K$ copies of $\HH \otimes_M \bar\HH$ is weakly contained in the coarse $M$-$M$ correspondence.

\begin{thm}\label{thm:generalizedmain} With the assumptions and notations as above, suppose $P \subset M$ is a diffuse von Neumann subalgebra such that $\NN_M(P)$ acts weakly compactly on $P$ via conjugation. Then $Q = \NN_M(P)''$ is amenable.
\end{thm}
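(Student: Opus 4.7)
My plan is to apply Theorem \ref{thm:amenablenormalizer} to obtain left amenability of $\HH$ over $Q$, and then use a tensor-power argument to transfer this to (ordinary) amenability of $Q$ via the coarse $M$-$M$ correspondence.

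First I would invoke Theorem \ref{thm:amenablenormalizer} — either directly, if one of its two hypotheses holds outright, or via a standard dichotomy. If $\alpha_t$ converges uniformly in $\nor{\cdot}_2$ to the identity on some $(Qp_0)_1$ with $0 \neq p_0 \in \ZZ(Q' \cap M)$, then Popa's spectral-gap / intertwining machinery (as in the proof of Theorem 4.9 of \cite{ozawapopacartanI}) forces $Qp_0$ to be amenable, and a maximality argument on such central projections extends this to $Q$ itself. Otherwise, Theorem \ref{thm:amenablenormalizer}(2) applies (using that $\pi^{\otimes K}$ tempered forces $\pi$ to be mixing), and we obtain left amenability of $\HH$ over $Q$. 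By the hypertrace characterization, i.e.\ Theorem \ref{thm:hypertrace}(1), this yields a net $(\xi_n) \subset \HH \otimes_M \bar\HH$ with $\ip{x\xi_n}{\xi_n} \to \tau(x)$ for every $x \in M$ and $\nor{[u,\xi_n]} \to 0$ for every $u \in \UU(Q)$.

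Next I would form $\eta_n = \xi_n \otimes_M \cdots \otimes_M \xi_n \in (\HH \otimes_M \bar\HH)^{\otimes_M K}$. A telescoping estimate, using associativity of the $M$-valued inner product on right-bounded vectors, shows that $\nor{[u,\eta_n]} \to 0$ for $u \in \UU(Q)$ and $\ip{x\eta_n}{\eta_n} \to \tau(x)$ for $x \in M$. Thus the trivial $Q$-$Q$ correspondence is weakly contained in $(\HH \otimes_M \bar\HH)^{\otimes_M K}$ as a $Q$-$Q$ bimodule. On the other hand, as noted in the paragraph immediately preceding Theorem \ref{thm:generalizedmain}, $(\HH \otimes_M \bar\HH)^{\otimes_M K}$ is weakly contained in the coarse $M$-$M$ correspondence — this is precisely the temperedness of $\pi^{\otimes K}$ translated through the functor of Section 1.1. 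Restricting the $M$-action to $Q$, the coarse $M$-$M$ correspondence decomposes as a direct sum of copies of the coarse $Q$-$Q$ correspondence (since $L^2(M)$ splits into copies of $L^2(Q)$ as left and as right $Q$-module). Composing the two weak containments, the trivial $Q$-$Q$ correspondence is weakly contained in the coarse $Q$-$Q$ correspondence, which is Connes's criterion for amenability of $Q$.

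The main obstacle I anticipate is making the tensor-power step rigorous with the correct uniformities: one must verify that both almost-$Q$-centrality and the asymptotic trace condition genuinely survive passage to $K$-fold $M$-tensor products, without degrading when composed with the weak-containment approximation into the coarse bimodule. This is precisely where the formulation through invariant vectors (rather than derivations) is essential: tensor products of almost-invariant vectors inherit almost-invariance in the tensor-product correspondence in a transparent, functorial way, whereas tensor powers of derivations are not well-defined and perturb the underlying bimodule in an uncontrolled manner — exactly the difficulty the introduction describes the paper as circumventing through Sauvageot's realization of $\HH \otimes_M \bar\HH$ as $\LL^2(\NN,\bar\tau)$.
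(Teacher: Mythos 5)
Your proposal follows essentially the same route as the paper: extract from Theorems \ref{thm:amenablenormalizer} and \ref{thm:hypertrace} a net of almost-$Q$-central, trace-approximating vectors in $\HH \otimes_M \bar\HH$ (normalized so that $\ip{x\xi_n}{\xi_n} = \ip{\xi_n x}{\xi_n} = \tau(pxp)/\tau(p)$, which gives the uniform left/right boundedness your telescoping estimate needs), take $K$-fold $M$-tensor powers, and use the weak containment of the resulting correspondence in the coarse $M$-$M$ bimodule to conclude amenability of $Q$ via the Haagerup/Connes criterion. The one inaccuracy is in your preliminary dichotomy: uniform $\nor{\cdot}_2$-convergence of $\alpha_t$ on $(Qp_0)_1$ does not by itself force $Qp_0$ to be amenable --- in the relevant case of a proper cocycle that branch is simply vacuous, since compactness of $E_M \circ \alpha_t$ together with diffuseness of $P$ rules it out --- but this side remark does not affect your main argument.
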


\begin{proof} Let $p$ be a non-zero projection in $\ZZ(Q' \cap M)$.  By the proofs of Theorems \ref{thm:amenablenormalizer} and \ref{thm:hypertrace}, it follows that we can find a net $(\xi_n)$ in $\HH \otimes_M \bar\HH$ such that $\ip{x \xi_n}{\xi_n} \rightarrow \tau(pxp)/\tau(p)$ for all $x \in M$ and $\nor{[u,\xi_n]} \rightarrow 0$ for all $u \in \UU(Q)$.  In fact, without loss of generality we may assume that $\ip{x \xi_n}{\xi_n} = \tau(pxp)/\tau(p) = \ip{\xi_n x}{\xi_n}$ for all $x \in M$ (see the proof of Proposition 3.2 in \cite{ozawapopacartanI}). In particular, $(\xi_n)$ is uniformly left and right bounded. Let $\tilde\xi_n$ be the $M$-tensor product of $K$ copies of $\xi_n$. Then then net $(\tilde\xi_n)$ may be seen to satisfy the same properties. Since $(\tilde\xi_n)$ are vectors in a correspondence weakly contained in the coarse $M$-$M$ correspondence, we have that for any finite subset $F \subset \UU(Q)$ that \[\nor{\sum_{u \in F} up \otimes \overline{up}}_{M \bar\otimes \bar M} \geq \lim_n\nor{\sum_{u \in F}u \tilde\xi_n u^\ast} = \abs{F}.\]  Hence, by Haagerup's criterion \cite{haagerupinjectivity}, $Q$ is amenable.
\end{proof}

\begin{rem} Let $M$ be a $\rm II_1$ factor and $\delta$ a closable real derivation from $M$ into an $M$-$M$ correspondence $\HH$ (cf. \cite{petersonl2}). Suppose $P \subset M$ is a von Neumann subalgebra and $\NN_M(P)$ acts weakly compactly on $P$ by conjugation.  Let $Q = \NN_M(P)''$.  One can show that if $\delta^*\bar\delta$ has compact resolvents, then $\HH$ is left amenable over $Q$. The proof is identical to the proof of Theorem B in \cite{ozawapopacartanII}, using the generalized Haagerup's criterion (Corollary \ref{cor:haagerup}). In particular, this sharpens Theorem A in \cite{ozawapopacartanII}: any $\rm II_1$ factor $M$ with the CMAP admitting such a derivation into a nonamenable correspondence has no Cartan subalgebras.
\end{rem}

We are now ready to prove Theorems \ref{thm:main1} and \ref{thm:main2}.

\begin{proof}[Proof of Theorem \ref{thm:main1}]
If $\G$ is an ICC lattice in $\SO(n,1)$ for $n \geq 3$ or $\SU(n,1)$ for $n \geq 2$, then Theorems 1.9 in \cite{shalomrigidity} shows that $\G$ possesses an unbounded cocycle into some strongly $\ell^p$ representation for $p \geq 2$. By Theorem 3.4 in the same, any unbounded cocycle for such a lattice is proper.  Since $L\G$ has the CMAP by \cite{decannierehaagerup} and \cite{haagerupwithoutcmap}, by Theorems \ref{thm:weaklycompactnormalizers} and \ref{thm:generalizedmain} the result obtains.
\end{proof}

\begin{proof}[Proof of Theorem \ref{thm:main2}]
For the first assertion, since $\G \cong \G_1 \ast \G_2$, $\G$ admits a canonical unbounded cocycle $b : \G \rightarrow \bigoplus^\infty\ell^2\G$ into a direct sum of left regular representations. The left regular representation is mixing and $L\G$-$L\G$ correspondence associated to the left regular representation (the coarse correspondence) is amenable if and only if $\G$ is amenable. So, if $L\G$ did admit a Cartan subalgebra, then by Theorems \ref{thm:weaklycompactnormalizers} and \ref{thm:amenablenormalizer} the deformation $\alpha_t$ of $L\G$ obtained from the cocycle $b$ would have to converge uniformly on $(L\G)_1$ as $t \rightarrow 0$. But this contradicts that $b$ is unbounded.

For the second assertion, if a $\rm II_1$ subfactor $N \subset L\G$ admits a Cartan subalgebra, then we have that $\alpha_t$ converges $\nor{\,\cdot\,}_2$-uniformly on the unit ball of $N$, since $N$ has the CMAP and the coarse $L\G$-$L\G$ correspondence viewed an $N$-$N$ correspondence embeds into a direct sum of coarse $N$-$N$ correspondences. Let $\tilde\G = \G \ast \F_2$, where $\F_2$ is the free group on two generators.  Let $u_1,u_2 \in L\F_2$ be the canonical generating unitaries and $h_1,h_2 \in L\F_2$ self-adjoint elements such that $u_j = \exp(\pi i h_j)$, $j = 1,2$.  Define $u_j^t = \exp(\pi i t h_j)$, $j = 1,2$, and let $\theta_t$ be the $\ast$-automorphism of $L\tilde\G$ given by $\theta_t = \Ad(u_1^t) \ast \Ad(u_2^t)$. It follows from Lemma 5.1 in \cite{petersonl2} and Corollary 4.2 in \cite{petersonsinclair} that $\theta_t$ converges uniformly in $\nor{\,\cdot\,}_2$-norm on the unit ball of $N \subset L\G \subset L\tilde\Gamma$ as $t \rightarrow 0$. An examination of the proof of Theorem 4.3 in \cite{ioanapetersonpopa} shows that this is the only condition necessary for the theorem to obtain.  Our result then follows directly from Theorem 5.1 in \cite{ioanapetersonpopa}.
\end{proof}

\bibliographystyle{amsplain}
\bibliography{bibfile}

\end{document}